\documentclass[reqno]{amsart}
\usepackage[scale=0.75, centering, headheight=14pt]{geometry}
\usepackage[latin1]{inputenc}
\usepackage[T1]{fontenc}
\usepackage{lmodern}
\usepackage[english]{babel}
\usepackage{graphicx} 

\usepackage{hyperref}

\hypersetup{
    colorlinks = true,
    citecolor = blue
}

\usepackage{mathrsfs, dsfont}
\usepackage{amssymb}
\usepackage{amsthm}
\usepackage{amsmath,amsfonts,amssymb,esint}
\usepackage{graphics,color}
\usepackage{enumerate}
\usepackage{mathtools,centernot}
\usepackage{tikz}
\usetikzlibrary{patterns} 
\usetikzlibrary{decorations.pathmorphing}
\usetikzlibrary{arrows.meta, positioning}
\usepackage{cleveref}
\usepackage{subcaption}
\usepackage{aliascnt}
\usepackage[initials, alphabetic, nobysame]{amsrefs}
\usepackage{soul}

\makeatletter
\def\newaliasedtheorem#1[#2]#3{
  \newaliascnt{#1@alt}{#2}
  \newtheorem{#1}[#1@alt]{#3}
  \expandafter\newcommand\csname #1@altname\endcsname{#3}
}
\makeatother

\usepackage{indentfirst}

\usepackage{esint}
\usepackage{mathrsfs}
\usepackage{stmaryrd}

\makeatletter
\newsavebox{\measure@tikzpicture}

\newcommand{\setword}[2]{%
  \phantomsection
  #1\def\@currentlabel{\unexpanded{#1}}\label{#2}%
}

\renewcommand\labelenumi{(\roman{enumi})}
\renewcommand\theenumi\labelenumi

\newtheorem{theorem}{\bf Theorem}[section]
\newtheorem{remark}[theorem]{\bf{Remark}}
\newtheorem{lemma}[theorem]{\bf Lemma}

\newtheorem*{theorem*}{Theorem}
\newtheorem*{proposition*}{Proposition}
\newtheorem{claim}[theorem]{\bf{Claim}}
\theoremstyle{definition}
\newtheorem{example}[theorem]{Example}
\newtheorem{definition}[theorem]{\bf Definition}

\newcommand{\Image}{\operatorname{Im}}

\newcommand{\eps}{\varepsilon}
\newcommand{\R}{\mathbb R}
\newcommand{\N}{\mathbb N}

\newcommand{\Z}{\mathbb Z}

\newcommand{\T}{\mathbb T}

\newcommand{\Leb}{\mathcal{L}}

\newcommand{\metric}{\mathsf{d}}
\newcommand{\divergence}{\operatorname{div}}

\newcommand{\topo}{\operatorname{top}}
\newcommand{\per}{\operatorname{per}}
\newcommand{\ClassOfMaps}{\mathcal{F}}

\DeclareMathOperator{\Homeo}{Homeo}

\DeclareMathOperator{\supp}{supp}
\DeclareMathOperator{\argmin}{argmin}

\allowdisplaybreaks
\numberwithin{equation}{section}

\author[Carl J. P. Johansson]{Carl Johan Peter Johansson}
\address{Institute of Mathematics, EPFL, Station 8, 1015 Lausanne, Switzerland}
\email{carl.johansson@epfl.ch}
\author[Giulia Mescolini]{Giulia Mescolini}
\address{Institute of Mathematics, EPFL, Station 8, 1015 Lausanne, Switzerland}
\email{giulia.mescolini@epfl.ch}

\subjclass[2020]{37C20, 37B40, 35Q49}
\keywords{Topological entropy, Flow maps, Time-periodic velocity fields, Osgood continuity}

\title[Infinite topological entropy for velocity fields]{Topological entropy is generically infinite for non-Lipschitz velocity fields}

\begin{document}

\begin{abstract}
 We prove that for any Osgood non-Lipschitz modulus of continuity $\omega$, flow maps associated with time-periodic $\omega$-continuous velocity fields generically (in the sense of Baire) have infinite topological entropy.
\end{abstract}

\maketitle

\section{Introduction}
In this note, we study topological entropy of flow maps
\begin{equation}
\label{eq:flowmap}
\left\{
\begin{array}{l}
    \frac{d}{dt} X^{b}_t(x) = b(t, X_t^{b}(x)); \\
    X_0^b (x) = x;
\end{array}
\right.
\end{equation}
associated with time-periodic velocity fields $b \colon \R_+ \times \T^2 \to \R^2$ on the two-dimensional flat torus. Without loss of generality, the period is assumed to be one, i.e. $b(t+1, x) = b(t,x)$ for all $t \in \R_+$ and $x \in \T^2$. Topological entropy is a central notion in the modern theory of dynamical systems and it measures the rate of increase of topological complexity of a dynamical system as it evolves over time (see Section~\ref{sec:TopologicalEntropy}). If $b$ is Lipschitz in space, then $X_1^b$ is also Lipschitz and hence its topological entropy is finite, see \cite{SI70, RB71}.
For continuous and H\"older homeomorphisms, it has been proved that topological entropy is generically infinite by Yano \cite{KY80} and De Faria, Hazard and Tresser \cite{EDFPHCT21}, respectively. 
To the best of the authors' knowledge, no results addressing the question of the typical behavior of topological entropy for flow maps associated to velocity fields below Lipschitz regularity are available in the literature. The goal of this note is to pursue that question.  
We will consider non-Lipschitz Osgood moduli of continuity, i.e. $\omega \in C^1(\R_+;\R_+)$ increasing, concave and such that
\begin{equation}
\label{eq:osgood_hp}
\omega(0) = 0, \quad \int_0^1 \dfrac{1}{\omega(s)} \, ds = \infty \quad \text{and} \quad \lim_{s \to 0} \dfrac{s}{\omega(s)} = 0.
\end{equation}
An example is the $\log$-Lipschitz modulus $\omega_{LL}$, namely $\omega_{LL}(s) = - s \log s$ near $0$.
We consider the space of $1$-periodic velocity fields which are integrable in time and $\omega$-continuous in space with vanishing $\omega$-oscillation, denoted by $L^1_{\per}(\R_+ ; c_{\omega}(\T^2; \R^2))$ (see Subsection~\ref{subsec:Notation}).
The Osgood assumption guarantees uniqueness and continuity of the flow map \cite{WFO98}. The 1-periodicity implies $X_n^b = (X_1^b)^n$ for all $b \in L^1_{\per}c_{\omega}$ and $n \in \N_{\geq 1}$, meaning that it behaves as a discrete dynamical system, making topological entropy a meaningful quantity to describe the topological complexity of $X_t^b$.
We prove that flow maps associated with velocity fields in $L^1_{\per}(\R_+ ; c_{\omega}(\T^2; \R^2))$ generically (in the sense of Baire) have infinite topological entropy:
\begin{theorem}
\label{thm:main}
    Let $\omega \in C^1(\R_+;\R_+)$ be a non-Lipschitz Osgood modulus. Then {$L^1_{\per}(\R_+; c_{\omega}(\T^2; \R^2))$} contains a residual set $\mathcal{X}$ such that for all $b \in \mathcal{X}$, the topological entropy of $X_1^b$ is infinite.
\end{theorem}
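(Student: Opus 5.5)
The plan is a standard Baire argument. For each $N \in \N$, let
\[
\mathcal{U}_N := \{ b \in L^1_{\per}c_{\omega} : h_{\topo}(X_1^b) > N \}.
\]
I will show that each $\mathcal{U}_N$ contains an open dense set. Then $\mathcal{X} := \bigcap_N \operatorname{int}\,\mathcal{U}_N$ is residual. The space is a closed subspace of a Banach space, hence complete, so Baire's theorem applies. The entropy is not continuous, so I will not use openness of $\mathcal{U}_N$ itself. Instead I will use a robust witness for entropy: a \emph{topological horseshoe}. Concretely, this means disjoint closed topological discs $D_1,\dots,D_k \subset Q$, with $Q$ a square, such that $X_1^b$ maps each $D_i$ across $Q$ in the "Markov" sense, with $k > e^N$. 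Such a crossing configuration gives a semiconjugacy of an invariant subset onto the full shift on $k$ symbols, so $h_{\topo}(X_1^b) \geq \log k$. The crossing conditions are strict, so they persist under $C^0$-small perturbations of the map.

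This requires stability of the flow map. I will use that $b \mapsto X_1^b$ is continuous from $L^1_{\per}c_\omega$ into $C^0(\T^2;\T^2)$. This follows from the Osgood estimate: with $\delta(t) = \|X_t^b - X_t^{b'}\|_\infty$ one gets $\delta' \leq \omega(\delta) + \|b(t)-b'(t)\|_\infty$. Integrating against $\int ds/\omega(s) = \infty$ gives $\delta(1) \to 0$ as $\|b-b'\|_{L^1 C^0} \to 0$. Hence, if $b_0$ has a strict $k$-horseshoe, so does every $b$ in a small ball around $b_0$. This gives the open set.

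Density is the main step and the main obstacle. Given any $b$ and $\eps > 0$, I approximate $b$ by a smooth periodic field $\tilde b$. Here I use that $c_\omega$ is the closure of smooth functions, by the vanishing oscillation. I then pick a point $x_0$ and a tiny time window $[t_0, t_0+\tau]$ of the period, and add a localized perturbation $\eta$ supported in a ball $B_r$ around the trajectory point. The field $\eta$ is a rescaled "stirring" field: $\eta(t,x) = \lambda\, \phi((x - x_0)/r)$ for a fixed divergence-free field $\phi$. The flow of $\phi$ for time $\lambda\tau/r$ realises a $k$-fold folding horseshoe on a unit square, and this happens on the small square of size $r$, conjugated through the smooth flow of $\tilde b$. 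One could instead compose with the time-one map of $\tilde b$; it is simpler to make $\tilde b$ vanish near $x_0$ during the window, by a further small modification.

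The cost is estimated as follows. The sup norm of $\eta$ is $\lambda$, and its $\omega$-seminorm is about $\lambda\, r^{-1}\|\nabla\phi\|/ (\omega(r)/r) = \lambda/\omega(r)$, up to constants. I need a fixed rescaled flow time $T = \lambda\tau/r$, so $\lambda = Tr/\tau$. The $L^1_t$ cost is therefore $\tau \cdot \lambda\,(1 + 1/\omega(r)) \approx T\,r/\omega(r)$. This tends to $0$ as $r \to 0$, precisely because $\omega$ is non-Lipschitz: $s/\omega(s) \to 0$. This is where the hypothesis enters, and it mirrors the fact that in the Lipschitz class the cost $Tr/r = T$ would not vanish.

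It remains to check two points. First, $\eta$ has vanishing $\omega$-oscillation. This holds since it is smooth. Second, the horseshoe produced inside $B_r$ survives composition with the rest of the time-one map. I will arrange this by choosing $x_0$ to be a fixed point of a suitably modified smooth map, or by inserting the horseshoe so that the remaining dynamics is close to identity on $B_r$. For instance, I can reparametrize time so that $\tilde b$ acts only on $[0,1-\tau]$, and then choose the perturbation along a periodic orbit. If no periodic orbit exists, I will first create one by a small perturbation (a closing-lemma-free trick: add a tiny localized field that pushes a recurrent point back). If this bookkeeping becomes delicate, the fallback is to place $k$ disjoint small stirring regions where needed, each of cost $o(1)$.
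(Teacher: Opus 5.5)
\textbf{Verdict: genuine gap.} Your architecture matches the paper's: Baire with a $C^0$-robust Yano-type horseshoe, Osgood/Bihari--LaSalle continuity of $b \mapsto X^b$, a cheap localized push to create a periodic orbit, and a rescaled stirring field of cost $\sim r/\omega(r) \to 0$. The gap is the step you leave open, namely that the horseshoe built in $B_r(x_0)$ survives the rest of the dynamics. This needs an idea you do not supply, and none of the options you list works as stated.

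Suppose $x_0$ is a periodic point of period $p$ of the smooth field $\tilde b$. The return map $X_p^{\tilde b}$ is not close to the identity on $B_r(x_0)$ at the scale that matters. Rescaled by $r$ around $x_0$, it converges to the linear map $A = DX_p^{\tilde b}(x_0)$. So it moves points by an amount of order $r$, the same size as the horseshoe. The return map of your perturbed field is then, at unit scale and up to $O(r)$ errors, a composition of $H$ with $A$ rather than $H$ alone. For a general $A$ the crossing conditions defining $\ClassOfMaps$ fail; a rotation by $\pi/2$ or a strong contraction are examples. Making $x_0$ a fixed point, reparametrizing time, or making $\tilde b$ vanish near $x_0$ during the window does not remove $A$. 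Placing several disjoint stirring regions does not help either, because entropy only comes from a region the dynamics keeps returning to.

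\textbf{The missing idea is Claim~\ref{claim:FirstPerturbation}.} Before inserting the horseshoe, freeze the field in disjoint tubes of radius $r$ around the periodic orbit. Near each $X_{t+i}(x_0)$, $i=0,\dots,p-1$, replace $\tilde b(t,y)$ by $\tilde b(t,X_{t+i}(x_0))\chi(y-X_{t+i}(x_0)) + \tilde b(t,y)\bigl(1-\chi(y-X_{t+i}(x_0))\bigr)$. Then:
\begin{enumerate}
\item the orbit of $x_0$ is unchanged;
\item the flow is a pure translation on the moving balls $B_{r/2}(X_t(x_0))$;
\item hence $X_p = \mathrm{id}$ on $B_{r/2}(x_0)$.
\end{enumerate}
This is cheap precisely because $\omega$ is non-Lipschitz. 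For your smooth $\tilde b$ the cost is $\lesssim \|\nabla \tilde b\|_{L^1_t L^\infty_x}\, r/\omega(r)$. The paper works with a general $b$ and instead uses the vanishing $\omega$-oscillation through Lemma~\ref{lemma:GoodScaleBadInterval}. Your smoothing step is legitimate and makes this estimate explicit.

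The stirring field must then sit in the moving frame of the orbit. The paper uses $N^{-1}h(t/N, y - X^u_t(0))$ in the tubes; your single-window version also works if the frozen field vanishes on that window. Either way, the return map equals the horseshoe map on $B_{r/4}(x_0)$.

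\textbf{Two further points.}
\begin{enumerate}
\item Since $h_{\topo}(X_1) = h_{\topo}(X_p)/p$, you need $\log k > pN$, with $k$ chosen after $p$ is fixed. Your proposal does not account for this.
\item Your closing step needs the care taken in Lemma~\ref{lemma:aux}. Choose the closest pair among the first $M$ iterates, so that the intermediate orbit points avoid the perturbation ball. Perform the push during a time window in which the base field vanishes, so that it does not disturb the orbit elsewhere.
\end{enumerate}
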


The proof (carried out in Section~\ref{sec:ProofOfMainThm}) relies on perturbing any velocity field in $L^1_{\per} c_{\omega}$ to find a periodic point around whose orbit we perturb further to increase entropy. This last perturbation, which is constructed in Section~\ref{sec:construction_pseudohorseshoe} and builds on the idea of pseudo-horseshoes of Yano \cite{KY80}, is structurally stable which leads to genericity.

\begin{remark}
    Theorem~\ref{thm:main} is stated in two dimensions but can be generalized to any dimension $\geq 2$.
\end{remark}

\begin{remark}
In two dimensions, if $b$ is Osgood and autonomous, then $h_{\topo}(X_1^b) = 0$ due to a result of Young \cite{LSY77}.
\end{remark}

\begin{remark}
Theorem~\ref{thm:main} also holds with $L^1_{\per}(\R_+ ; c_{\omega}(\T^2; \R^2))$ replaced by $L^1_{\per}(\R_+ ; c_{\omega}(\T^2; \R^2)) \cap \{ \divergence b \in L^{\infty}\}$ or $L^1_{\per}(\R_+ ; c_{\omega}(\T^2; \R^2)) \cap \{ \divergence b = 0\}$. For the case of bounded divergence, the proof remains identical while the divergence-free case requires a small adaptation in the proof of Claim~\ref{claim:FirstPerturbation}, see Remark~\ref{rmk:divfree}.
\end{remark}

In recent years, techniques from both deterministic and stochastic dynamical systems theory, applied to problems in fluid dynamics, have led to significant progress in mixing \cite{TMEAZ19, SBMZ22, MZ22, ABMCZRG23, EBMCCJ25}, enhanced dissipation \cite{TMEKLJM25, JBABSPS21} and passive scalar turbulence \cite{JBABSPS22B, ABKKH24, WCKR25}. In \cite{EBMCCJ25}, it was proved that metric entropy of flow maps (regular in the sense of DiPerna-Lions-Ambrosio \cite{DPL89, A04}) associated with incompressible $W^{1,1+\eps}$ Sobolev velocity fields is finite and bounded by $\| \nabla b \|_{L^1_{t,x}}$. While the Sobolev assumption is natural for metric entropy, topological entropy requires continuity of the flow map, guaranteed by the Osgood assumption in Theorem~\ref{thm:main}.
Topological entropy is related to metric entropy via the variational principle for entropy \cite[Theorem 10.1]{MV16}:
\begin{equation}
h_{\topo}(T) =  \sup_{\mu : T_{\#} \mu = \mu } h_{\mu}(T),
\end{equation}
where $h_{\mu}(T)$ denotes the metric entropy of $T$ with respect to an invariant measure $\mu$.
Hence, Theorem~\ref{thm:main} is evidence of super-exponential growth of complexity occurring in a (possibly low-dimensional) subset of $\T^2$. This is consistent with super-exponential loss of regularity, which may occur in Osgood velocity fields.
Finally, besides being a sufficient (and in fact necessary as shown by \cite{RCAK26}) condition for uniqueness and continuity of the flow map, Osgood regularity is a compelling regularity class to study since it holds in several important applications where the velocity fails to be Lipschitz. Indeed, in the vorticity formulation of the two-dimensional Euler equations with initial vorticity in $L^{\infty}$, the velocity field is log-Lipschitz continuous \cite{VIY63} but fails to be Lipschitz as shown by the fact that the gradient of vorticity may grow double-exponentially \cite{AKVS14}. 
Note that our method also allows to consider velocity fields with exponentially integrable gradient (see Example~\ref{rmk:ExpIntGradient}), a property of such Euler velocity fields \cite[Lemma 3.2]{EBQHN21}.
See also \cite{HBJYC94, EBQHN21, TDDTMEJL23, DNMS24} and the references therein for further results regarding regularity in the two-dimensional Euler equations with bounded vorticity. 
In addition, velocity fields with Osgood modulus of continuity appear in various kinetic models, see for instance \cite{GCMICSGS24, JJAR25}.

\subsection{Notation}\label{subsec:Notation}
We work on the $2$-dimensional flat torus $\T^2$ with the geodesic distance $\metric$. 
When convenient, we identify $\T^2$ with $[-1,1]^2$. 
A constant $c$ depending on parameters $\ast$ is denoted by $c(\ast)$.
For any non-Lipschitz Osgood modulus of continuity, we write $[f]_{\omega} \coloneqq \sup_{x \neq y} \frac{|f(x) - f(y)|}{\omega(\metric(x,y))}$ and $\| f \|_{\omega} \coloneqq \| f \|_{C^0} + [f]_{\omega}$.
Recall $C_{\omega} \coloneqq \{ f : \| f \|_{\omega} < \infty \}$ and denote the closure of $C^{\infty}$ with respect to $\| f \|_{\omega}$ as $c_{\omega}$.
Note that $f \in C_{\omega}$ belongs to $c_{\omega}$ if and only if it has vanishing $\omega$-oscillation, i.e. $\lim_{r \to 0} \sup_{\metric(x,y)<r} \frac{|f(x) - f(y)|}{\omega(\metric(x,y))} = 0$.
Thus $c_{\omega} \subsetneq C_{\omega}$ and $C_{\overline{\omega}} \subsetneq c_{\omega}$ if $\lim_{s \to 0} \frac{\overline{\omega}(s)}{\omega(s)} = 0$. We write
\[
 L^1_{\per}(\R_+; c_{\omega}(\T^2; \R^2)) \coloneqq \left\{ b : \| b \|_{L^1_{\per}c_{\omega}} \coloneqq \int_0^1 \| b(t, \cdot) \|_{\omega} \, dt < \infty, \ b(t,\cdot) = b(t+1, \cdot) \, \forall t \in \R_+ \right\}.
\]
When convenient, we identify $1$-periodic functions with their restriction to $[0,1]$. 

\subsection*{Acknowledgments} The authors were supported by the Swiss State Secretariat for Education, Research and Innovation (SERI) under contract number MB22.00034 through the project TENSE. The authors would like to thank Maria Colombo and Elia Bru\'e for their guidance and valuable suggestions. The authors thank David Villringer for pointing out reference \cite{LSY77}.

\section{Topological entropy}\label{sec:TopologicalEntropy}
 We recall the notion of topological entropy. For a more detailed treatment of the topic, we refer to \cite{MR648108,MBGS02,MV16}.
 Let $(X, \metric)$ be a compact metric space and consider continuous maps $T \colon X \to X$. A collection of open sets $\alpha$ is called an \emph{open cover} if its union equals $X$. For any open covers $\alpha_1, \ldots, \alpha_n$, we define their \emph{join} as 
 \[
  \bigvee_{i = 1}^n \alpha_i = \left\{ \bigcap_{i = 1}^n A_i : A_i \in \alpha_i , i = 1, \ldots, n \right\}.
 \]
 \begin{definition}[Topological entropy]
     For any open cover $\alpha$, we define $N(\alpha)$ as the cardinality of the smallest subcover. For any continuous $T \colon X \to X$ and any open cover $\alpha$, we set
     \[
      h_{\topo}(T, \alpha) = \lim_{n \to \infty} \dfrac{1}{n} \log N \left( \bigvee_{i = 0}^{n-1} T^{- i} \alpha \right)
     \]
     where $T^{-i} \alpha = \{ T^{-i} O : O \in \alpha \}$ denotes the \emph{pull-back open cover}.
     Then the \emph{topological entropy} of $T$ is defined by
     \[
      h_{\topo}(T) = \sup_{\alpha} h_{\topo}(T, \alpha)
     \]
     where the supremum is taken over all finite open covers $\alpha$. 
 \end{definition}
 As an example, consider $\Sigma_k = \{ 0, \ldots, k-1 \}^{\Z}$ and the shift map $\sigma \colon \Sigma_k \to \Sigma_k$ defined by $(\sigma(w))_i = w_{i-1}$. Its topological entropy equals $\log k$ by \cite[Theorem 7.11]{MR648108}. In particular, it is straightforward to check that $h_{\topo}(\sigma) \geq \log k$ by taking $\alpha = \{ A_0, \ldots, A_{k-1} \}$ with $A_i = \{ w \in \Sigma_k : w_0 = i \}$. Then, 
 \[
  N \left( \bigvee_{i = 0}^{n-1} \sigma^{- i} \alpha \right) = k^n
 \]
 and hence $h_{\topo}(\sigma) \geq h_{\topo}(\sigma, \alpha) = \log k$. 
 \begin{definition}[Factor]
Let $T \colon X \to X$, $S \colon Y \to Y$ be continuous maps. A \emph{semiconjugacy} from $T$ to $S$ is a continuous surjective map $\pi \colon X \to Y$ such that $S \circ \pi = \pi \circ T$. If there is a semiconjugacy $\pi$ from $T$ to $S$, then $S$ is called a \emph{factor} of $T$.
\end{definition}

\begin{lemma}[{\cite[Proposition 2.5.6]{MBGS02}}]\label{lemma:EntropyOfFactors}
    Let $S \colon Y \to Y$ be a factor of $T \colon X \to X$, then $h_{\topo}(T) \geq h_{\topo}(S)$.
\end{lemma}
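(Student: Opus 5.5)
The statement to prove is Lemma~\ref{lemma:EntropyOfFactors}: if $\pi\colon X\to Y$ is a semiconjugacy from $T$ to $S$, then $h_{\topo}(T)\ge h_{\topo}(S)$. The plan is to pull open covers of $Y$ back to $X$ via $\pi$ and compare the cardinalities of minimal subcovers of the joins.

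Fix a finite open cover $\beta$ of $Y$. Since $\pi$ is continuous, $\pi^{-1}\beta=\{\pi^{-1}B : B\in\beta\}$ is a finite open cover of $X$. We will check two things.

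First, pullback by $\pi$ commutes with the dynamics. From $S\circ\pi=\pi\circ T$ we get $\pi\circ T^i=S^i\circ\pi$ for every $i$, so
\[
T^{-i}\pi^{-1}B=(\pi\circ T^i)^{-1}B=(S^i\circ\pi)^{-1}B=\pi^{-1}S^{-i}B .
\]
Since preimages commute with intersections, this gives
\[
\bigvee_{i=0}^{n-1}T^{-i}(\pi^{-1}\beta)=\pi^{-1}\Bigl(\bigvee_{i=0}^{n-1}S^{-i}\beta\Bigr).
\]

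Second, for any open cover $\gamma$ of $Y$ we claim $N(\pi^{-1}\gamma)=N(\gamma)$; only "$\ge$" is actually needed. Let $\{\pi^{-1}C_1,\dots,\pi^{-1}C_m\}$ be a subcover of $\pi^{-1}\gamma$. Then $\bigcup_j C_j\supseteq\pi(X)=Y$ because $\pi$ is surjective, so $\{C_1,\dots,C_m\}$ is a subcover of $\gamma$ and $N(\gamma)\le m$. Surjectivity is the only point of the argument that needs care: without it the pulled-back cover could be strictly smaller. The reverse inequality holds trivially, since preimages of a subcover of $\gamma$ form a subcover of $\pi^{-1}\gamma$.

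Combining the two facts,
\[
N\Bigl(\bigvee_{i=0}^{n-1}T^{-i}\pi^{-1}\beta\Bigr)=N\Bigl(\bigvee_{i=0}^{n-1}S^{-i}\beta\Bigr).
\]
Taking $\frac1n\log$ and letting $n\to\infty$ gives $h_{\topo}(T,\pi^{-1}\beta)=h_{\topo}(S,\beta)$. Here the limits exist by the standard subadditivity argument implicit in the definition. Therefore
\[
h_{\topo}(S,\beta)=h_{\topo}(T,\pi^{-1}\beta)\le h_{\topo}(T).
\]
Taking the supremum over all finite open covers $\beta$ of $Y$ yields $h_{\topo}(S)\le h_{\topo}(T)$.
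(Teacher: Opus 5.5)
Your proof is correct and complete. The paper does not prove this lemma; it only cites Brin--Stuck, Proposition 2.5.6. What you give is the standard open-cover (Adler--Konheim--McAndrew) argument.

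Its main advantage is that it works directly with the open-cover definition of $h_{\topo}$ adopted in Section~\ref{sec:TopologicalEntropy}. You need no compatibility between definitions of entropy, and no metric or uniform continuity of $\pi$. It also gives the sharper cover-by-cover identity $h_{\topo}(T,\pi^{-1}\beta)=h_{\topo}(S,\beta)$.

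In Bowen's metric formulation, the analogous proof instead uses uniform continuity of $\pi$. One picks $\delta$ so that $\metric(x,x')<\delta$ implies $\metric(\pi x,\pi x')<\eps$. Then $\pi$ pushes an $(n,\delta)$-spanning set for $T$ forward to an $(n,\eps)$-spanning set for $S$, using surjectivity again to lift points of $Y$. Letting $\eps\to 0$ gives the inequality.

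You correctly identified surjectivity as the one essential ingredient: it is what gives $N(\pi^{-1}\gamma)\ge N(\gamma)$. One minor remark: you need no separate subadditivity argument for $T$. The two sequences $N(\cdot)$ coincide term by term, so the limit for $T$ exists and equals the one for $S$.
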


\begin{lemma}[{\cite[Proposition 2.5.5]{MBGS02}}]\label{lemma:EntropyOnInvariantSubsets}
    Let $T \colon X \to X$ be a continuous map. If $A_i$, $i = 1, \ldots, k$ are closed subsets of $X$ such that $T(A_i) = A_i$ and whose union is $X$, then
    \[
     h_{\topo}(T) = \max_{1 \leq i \leq k} h_{\topo}(T |_{A_i}).
    \]
\end{lemma}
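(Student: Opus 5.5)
This is a standard result, and I would prove it directly from the definition of $h_{\topo}$ via open covers, using the two basic properties of joins and pull-backs together with the monotonicity of $N$ under refinement.

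\emph{Lower bound.} First I show $h_{\topo}(T) \geq h_{\topo}(T|_{A_i})$ for each $i$. Fix a finite open cover $\beta$ of $A_i$ in the relative topology. For each $B \in \beta$ choose an open $\tilde B \subset X$ with $\tilde B \cap A_i = B$, and adjoin $X \setminus A_i$, which is open because $A_i$ is closed. This gives a finite open cover $\alpha$ of $X$. Since $T(A_i) = A_i$, we have $(T|_{A_i})^{-j}(C \cap A_i) = T^{-j}C \cap A_i$. Therefore every element of $\bigvee_{j<n} T^{-j}\alpha$ restricted to $A_i$ is either empty or an element of $\bigvee_{j<n}(T|_{A_i})^{-j}\beta$. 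Any subcover of $\bigvee_{j<n} T^{-j}\alpha$ covering $X$ thus restricts to a subcover of $\bigvee_{j<n}(T|_{A_i})^{-j}\beta$ covering $A_i$. The sets $X\setminus A_i$ meet $A_i$ trivially and only drop out. Hence $N(\bigvee_{j<n}(T|_{A_i})^{-j}\beta) \leq N(\bigvee_{j<n} T^{-j}\alpha)$. Taking $\frac1n\log$, letting $n\to\infty$, and then taking the supremum over $\beta$ gives the claim.

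\emph{Upper bound.} Next I show $h_{\topo}(T) \leq \max_i h_{\topo}(T|_{A_i})$. Fix a finite open cover $\alpha$ of $X$, and let $\alpha_i = \{A\cap A_i : A\in\alpha\}$ be the induced cover of $A_i$. Write $\alpha^n = \bigvee_{j<n}T^{-j}\alpha$. By invariance, $(\alpha_i)^n$ for $T|_{A_i}$ is exactly the restriction of $\alpha^n$ to $A_i$. A minimal subcover of $(\alpha_i)^n$ has $N((\alpha_i)^n)$ elements, each of the form $C\cap A_i$ with $C\in\alpha^n$. Collecting these $C$'s over $i=1,\dots,k$ covers $\bigcup_i A_i = X$, so
\[
N(\alpha^n) \leq \sum_{i=1}^k N((\alpha_i)^n) \leq k \max_i N((\alpha_i)^n).
\]
Taking $\frac1n\log$, the factor $\frac{\log k}{n}$ vanishes in the limit. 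The one step needing care is exchanging the limit with the max over $i$. I handle it by passing to a subsequence along which a single index $i_0$ realizes the max infinitely often; since the limits defining $h_{\topo}(T|_{A_{i}},\alpha_{i})$ exist, this gives
\[
h_{\topo}(T,\alpha) \leq h_{\topo}(T|_{A_{i_0}},\alpha_{i_0}) \leq \max_i h_{\topo}(T|_{A_i}).
\]
Taking the supremum over $\alpha$ concludes the proof.

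The main (mild) obstacle is the bookkeeping of relative open covers: invariance $T(A_i)=A_i$ is exactly what ensures $T^{-j}$ commutes with restriction to $A_i$. Only $T(A_i)\subset A_i$ is actually needed, since then $(T|_{A_i})^{-j}(C\cap A_i) = T^{-j}C\cap A_i$ still holds.
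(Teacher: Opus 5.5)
Your proof is correct. The paper gives no argument of its own for this lemma; it only cites Brin--Stuck, who define topological entropy metrically (Bowen's minimal $(n,\varepsilon)$-covers, spanning or separated sets). In that setting the same two steps appear: monotonicity under restriction to a closed forward-invariant set, and a counting bound that controls the quantity for $X$ by the sum over the pieces $A_i$. By citing it, the paper tacitly relies on the equivalence of that definition with the open-cover definition it states in Section~\ref{sec:TopologicalEntropy}.

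Your route proves the lemma directly from the open-cover definition the paper actually uses, so it is self-contained and needs no equivalence theorem. The only extra work is the bookkeeping of relative covers, which you handle correctly through the identity $(T|_{A_i})^{-j}(C\cap A_i)=T^{-j}C\cap A_i$. That identity holds under forward invariance alone, so your remark that $T(A_i)\subseteq A_i$ suffices is right.

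Two small comments. First, the subsequence step is more than you need: each $\frac1n\log N((\alpha_i)^n)$ converges, and the maximum of finitely many convergent sequences converges to the maximum of the limits. Second, the paper only ever uses the lower-bound half, for $\Lambda$ in Lemma~\ref{lemma:pert_horseshoe_construction} and for the invariant balls in Example~\ref{example:vel_field}. Your first step already proves that half for a single closed invariant set, without any covering hypothesis.
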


 \begin{lemma}[{\cite[Proposition 2.5.5]{MBGS02}}]\label{lemma:EntropyOfIterations}
    Let $T \colon X \to X$ be a continuous map. Then $h_{\topo}(T^n) = n h_{\topo}(T)$ for all integers $n \geq 1$.
\end{lemma}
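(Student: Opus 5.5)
The plan is to work directly with the open-cover definition of $h_{\topo}$ and compare the join covers of $T$ and of $T^n$. Both inequalities will come from a single identity between these joins.

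Fix a finite open cover $\alpha$ and set
\[
\beta \coloneqq \bigvee_{i=0}^{n-1} T^{-i}\alpha ,
\]
which is again a finite open cover. The identity I aim for is
\[
\bigvee_{j=0}^{k-1} (T^{n})^{-j}\beta = \bigvee_{i=0}^{nk-1} T^{-i}\alpha .
\]
To check it, use that preimages commute with intersections, so $T^{-nj}\bigl(\bigcap_{i} T^{-i}A_i\bigr)=\bigcap_i T^{-(nj+i)}A_i$. Every index $m\in\{0,\dots,nk-1\}$ is written uniquely as $m=nj+i$ with $0\le j<k$ and $0\le i<n$. Hence the two families of sets coincide.

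Taking $\frac1k\log N(\cdot)$ of the identity and letting $k\to\infty$ gives
\[
h_{\topo}(T^n,\beta)=\lim_{k\to\infty}\frac{1}{k}\log N\Bigl(\bigvee_{i=0}^{nk-1}T^{-i}\alpha\Bigr)= n\,h_{\topo}(T,\alpha).
\]
The second equality uses that the limit defining $h_{\topo}(T,\alpha)$ exists, by the usual subadditivity of $a_m=\log N(\bigvee_{i<m}T^{-i}\alpha)$. Therefore the subsequence $m=nk$ has the same limit. Taking the supremum over $\alpha$ yields $h_{\topo}(T^n)\ge n\,h_{\topo}(T)$.

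For the reverse inequality I use monotonicity: if an open cover $\gamma'$ refines $\gamma$, meaning every element of $\gamma'$ lies in some element of $\gamma$, then $N(\gamma)\le N(\gamma')$. Joins and pull-backs preserve refinement, so $h_{\topo}(S,\gamma)\le h_{\topo}(S,\gamma')$ for any continuous $S$. The cover $\beta$ refines $\alpha$, because its $i=0$ factor is $\alpha$ itself. Applying this with $S=T^n$ gives
\[
h_{\topo}(T^n,\alpha)\le h_{\topo}(T^n,\beta)=n\,h_{\topo}(T,\alpha)\le n\,h_{\topo}(T).
\]
Taking the supremum over $\alpha$ gives $h_{\topo}(T^n)\le n\,h_{\topo}(T)$, which completes the argument; all statements also hold if some entropies are infinite.

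The only step needing care is the bookkeeping in the join identity together with the passage to the subsequence $nk$. Both rest on the standard facts that the limit in the definition exists (Fekete's lemma) and that $N$ is monotone under refinement.
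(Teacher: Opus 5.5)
Your proof is correct. The join identity $\bigvee_{j=0}^{k-1}T^{-nj}\beta=\bigvee_{i=0}^{nk-1}T^{-i}\alpha$ with $\beta=\bigvee_{i=0}^{n-1}T^{-i}\alpha$ holds, as does the passage to the subsequence $m=nk$ via Fekete's lemma. The refinement bound $h_{\topo}(T^n,\alpha)\le h_{\topo}(T^n,\beta)$ is also sound. Together these give the standard open-cover proof (as in Walters \cite{MR648108}).

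The paper itself gives no proof and only cites Brin--Stuck, whose treatment uses the Bowen--Dinaburg definition, via the metrics $d_k^T(x,y)=\max_{0\le i<k}\metric(T^ix,T^iy)$ and $(k,\eps)$-spanning or separated sets. In that framework, $h_{\topo}(T^n)\le n\,h_{\topo}(T)$ follows from the pointwise bound $d_k^{T^n}\le d_{nk}^T$. The reverse inequality needs uniform continuity of $T,\dots,T^{n-1}$, to trade a scale $\eps$ for $T$ against a smaller scale $\delta(\eps)$ for $T^n$.

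Your route works directly with the open-cover definition the paper actually adopts in Section~\ref{sec:TopologicalEntropy}. It therefore uses neither the metric nor the equivalence of the Adler--Konheim--McAndrew and Bowen--Dinaburg definitions, which the paper's citation implicitly relies on. The metric route is the natural one when entropy is estimated through separated sets.

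Finally, the paper only uses the inequality $h_{\topo}(T^n)\le n\,h_{\topo}(T)$, to pass from $h_{\topo}(X_N^v)>NK$ to $h_{\topo}(X_1^v)>K$. In your proof this is the half that rests on $\beta$ refining $\alpha$.
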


\section{Entropy-increasing building block} 
\label{sec:construction_pseudohorseshoe}
We build a velocity field supported in a small cylinder such that its time-one flow map has large entropy.
\begin{lemma}
\label{lemma:pert_horseshoe_construction}
    Let $N \geq 1$ be an integer. There exists a collection of smooth incompressible velocity fields $\{b_{\eps} \}_{\eps \in (0, 1]}$ from $[0, 1] \times \T^2$ to $\R^2$ such that
    \begin{itemize}
        \item $\lim_{\eps \to 0} \| b_{\eps} \|_{C^0([0,1];c_\omega)} = 0$;
        \item $\supp (b_{\eps}) \subseteq [0,1] \times B_{\eps}(0)$;
        \item for any $\eps \in (0, 1]$, there exists $\delta = \delta(N, \eps) > 0$ such that if a homeomorphism $T \in \Homeo(\T^2)$ satisfies
    \end{itemize}
    \begin{equation}\label{eq:ConditionToPreserveEntropy}
     \sup_{x \in B_{\eps}(0)} \metric ( T(x) , X_1^{b_{\eps}}(x) ) < \delta,
    \end{equation}
    then $h_{\topo}(T) \geq \log N$.
\end{lemma}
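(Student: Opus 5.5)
Our approach has three parts. First we build one fixed smooth incompressible field whose time-one map has a robust, Yano-style pseudo-horseshoe with $N$ branches. We then shrink it to $B_\eps(0)$ by a self-similar rescaling. Finally, Lemma~\ref{lemma:EntropyOfFactors} turns the robustness into the entropy bound.

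\textbf{Step 1 (fixed building block).} Fix a smooth divergence-free $b^{(1)}\colon[0,1]\times\R^2\to\R^2$ supported in $[0,1]\times B_{1/2}(0)$, obtained from a smooth compactly supported stream function; time-dependence is allowed. We choose it so that its time-one map $\Phi$ has the following property. There are a closed square $Q\subset B_{1/2}(0)$ and $N$ pairwise disjoint closed vertical substrips $V_1,\dots,V_N\subset Q$ such that, for every $i$, $\Phi(V_i)$ crosses $Q$ with margin: $\Phi(V_i)\cap Q$ contains a full horizontal strip of $Q$, and $\Phi$ maps the two horizontal sides of $V_i$ outside a fixed neighbourhood of $Q$.

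Such a map can be produced as a composition of shears and rotations within the disk. One way is to stretch $Q$ vertically into a thin band, fold it $N$ times (snake shape) by successive area-preserving twist maps, and bring it back across $Q$. Each such map is a time-one flow of a compactly supported Hamiltonian, and a concatenation in time of finitely many of these gives $b^{(1)}$. We need only the crossing property, not hyperbolicity.

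\textbf{Step 2 (robust factor onto the shift).} Let $\eta>0$ be the margin from Step~1. For any homeomorphism $T$ with $\sup_{Q}|T-\Phi|<\eta$, each $T(V_i)$ still crosses $Q$ with the same combinatorics. Set $V_i^w$ for finite words $w$ inductively. A standard Yano/Conley-type argument, using only the topology of crossings (connectedness, i.e.\ a Jordan-curve or Brouwer-degree argument), shows that for every $w\in\{1,\dots,N\}^{\Z}$ the set
\[
K_w=\bigcap_{n\in\Z}T^{-n}(V_{w_n})
\]
is nonempty. Let $K=\bigcup_w K_w$, which is compact and $T$-invariant. The map $\pi\colon K\to\Sigma_N$ sending $x$ to its itinerary is continuous, because the $V_i$ are disjoint and closed. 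It is surjective, and it satisfies $\pi\circ T=\sigma^{-1}\circ\pi$ up to the indexing convention. So the shift is a factor of $T|_K$, and Lemma~\ref{lemma:EntropyOfFactors} gives $h_{\topo}(T|_K)\ge\log N$. It then remains to note $h_{\topo}(T)\ge h_{\topo}(T|_K)$ for closed invariant $K$; this follows directly from the open-cover definition by restricting covers, a monotonicity we will check.

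We expect this step to be the main obstacle. The crossing must be formulated so that it is stable under $C^0$ perturbations by arbitrary homeomorphisms, which need not be smooth, and so that nonemptiness of bi-infinite intersections follows. First we would try requiring $\Phi(V_i)$ to connect the two horizontal sides of a slightly enlarged $Q$ and to avoid its vertical sides. Then we would use that a continuum in $Q$ joining top to bottom must meet every continuum joining left to right.

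\textbf{Step 3 (rescaling).} Define $b_\eps(t,x)=\eps\, b^{(1)}(t,x/\eps)$. This field is smooth, incompressible, and supported in $B_{\eps/2}\subset B_\eps$. Its flow is $X_1^{b_\eps}=\eps\Phi(\cdot/\eps)$ on $B_\eps$ and the identity elsewhere. Hence Step~2 applies with the rescaled pieces $\eps Q,\eps V_i$ and $\delta=\eps\eta$, since hypothesis \eqref{eq:ConditionToPreserveEntropy} on $B_\eps(0)\supset\eps Q$ is exactly what is needed.

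For the norm, $\|b_\eps\|_{C^0}\le\eps\|b^{(1)}\|_{C^0}\to0$. For the seminorm, for $d=\metric(x,y)$,
\[
\frac{|b_\eps(x)-b_\eps(y)|}{\omega(d)}\le C\min\Big(\frac{d}{\omega(d)},\frac{\eps}{\omega(d)}\Big),
\]
using the Lipschitz bound and the sup bound respectively. If $d\le\eps$, this is at most $C\sup_{s\le\eps}s/\omega(s)$, which tends to $0$ by \eqref{eq:osgood_hp} together with concavity. If $d>\eps$, it is at most $C\eps/\omega(\eps)\to0$. Smoothness gives membership in $c_\omega$. This proves $\lim_{\eps\to0}\|b_\eps\|_{C^0([0,1];c_\omega)}=0$, which is where the non-Lipschitz assumption is used.
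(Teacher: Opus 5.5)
There is a genuine gap, in exactly the step you flagged: the crossing property you fix in Step~1 is not one from which a Yano/Conley argument can run.

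\textbf{Why your Step~1 condition fails.} You take vertical strips $V_i$ and send their \emph{horizontal} (short) sides outside a neighbourhood of $Q$. You also ask $\Phi(V_i)$ to contain a full horizontal strip. So the stretched direction is along the strips, and $\Phi$ turns it into the horizontal one. A stretched curve in $\Phi(V_i)$ therefore crosses the next strip $V_j$ sideways rather than lengthwise, and the condition does not iterate. In fact it forces no horseshoe at all.

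Take the area-preserving linear map $L(x,y)=(-\lambda y,x/\lambda)$ with $\lambda>1$, on $Q=[-1,1]^2$. It sends any vertical strip $[a,b]\times[-1,1]$ onto $[-\lambda,\lambda]\times[a/\lambda,b/\lambda]$, and its horizontal sides onto the lines $\{x=\mp\lambda\}$. So it has every property you list, for any choice of disjoint vertical strips. Being elliptic in $SL_2(\R)$, it is also the time-one map of a linear Hamiltonian flow that can be cut off away from $Q$. Yet $L^2=-\mathrm{id}$, so every itinerary is periodic of period dividing $4$, and $K_w=\emptyset$ for every non-periodic $w$.

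The repair you sketch does not help either. Any set-level statement that $\Phi(V_i)$ crosses $Q$ gives at most $\Phi(V_i)\cap V_j\neq\emptyset$. It says nothing about whether $\Phi(V_i)\cap V_j$ still crosses $V_j$ in the stretched direction, which is what you need to apply $\Phi$ again.

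\textbf{The missing idea.} Make the strips transverse to the stretching direction, and state the crossing for \emph{every} crossing curve, a property inherited by subarcs. This is what the paper does.
\begin{itemize}
\item It cuts $Q=[-1/4,1/4]^2$ into vertical strips $F_i$ by vertical segments $E_0,\dots,E_N$, where $E_0$ and $E_N$ are the left and right sides of $Q$.
\item It requires $T(Q)\subseteq(-1/2,1/2)\times(-1/4,1/4)$ and $T(E_i)\subseteq D_{(-1)^i}$, where $D_1$ and $D_{-1}$ are boxes in that band to the right and left of $Q$.
\item Both conditions are open in the $\sup_{B_1}$ distance, since $Q$ and the $E_i$ are compact and the targets are open. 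This openness is what yields $\delta$.
\item Any curve in $Q$ from $E_0$ to $E_N$ contains a subarc in $F_i$ from $E_i$ to $E_{i+1}$. The image of that subarc runs from one side of $Q$ to the other inside the band. Hence it contains a subarc in $H_i=Q\cap T(F_i)$ joining $E_0$ to $E_N$.
\item Induction on word length gives such a curve in $H_{a_0}\cap T(H_{a_1})\cap\dots\cap T^k(H_{a_k})$. Compactness then gives nonempty bi-infinite intersections.
\item The $H_i$ are disjoint because $T(F_i)\cap T(F_{i+1})=T(E_{i+1})$ lies outside $Q$.
\end{itemize}

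Once Step~1 is replaced by this condition, the rest of your proposal goes through and matches the paper. That includes the itinerary factor onto the shift, monotonicity of entropy under restriction to a closed invariant set, and the rescaling with the $\omega$-seminorm estimate in Step~3.
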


\begin{proof}
    For all $i = 0, \ldots, N$, we define $E_i = \left\{ - \frac{1}{4} + \frac{i}{2 N} \right\} \times \left[- \frac{1}{4}, \frac{1}{4}\right]$, $D_{-1} = \left(- \frac{1}{2}, - \frac{1}{3}\right) \times \left(- \frac{1}{4}, \frac{1}{4}\right)$ and {$D_{1}~=~\left(\frac{1}{3},\frac{1}{2}\right)~\times~\left(- \frac{1}{4}, \frac{1}{4}\right)$}, see Figure~\ref{fig:beforeT} for a depiction of these sets.
\begin{figure}[htbp]
\centering

\begin{subfigure}{0.48\textwidth}
\centering
\begin{tikzpicture}[scale=4]
    \fill[pink!70] (-0.25,-0.25) rectangle (0.25,0.25);

    \fill[black!30] (-0.5,-0.25) rectangle (-0.3333,0.25);
    \node[text=black!50, below] at (-0.4167,-0.25) {\footnotesize{$D_{-1}$}};

    \fill[black!30] (0.3333,-0.25) rectangle (0.5,0.25);
    \node[text=black!50, below] at (0.4167,-0.25) {\footnotesize{$D_{1}$}};

    \fill[black] (-0.25,-0.25) rectangle (-0.125,0.25);
    \fill[black] (-0.125,-0.25) rectangle (0,0.25);
    \fill[black] (0,-0.25) rectangle (0.125,0.25);
    \fill[black] (0.125,-0.25) rectangle (0.25,0.25);

    \draw[dashed] (-0.7,0.25) -- (0.7,0.25);
    \draw[dashed] (-0.7,-0.25) -- (0.7,-0.25);
    \draw[dashed] (0.5,-0.5) -- (0.5,0.5);
    \draw[dashed] (-0.5,-0.5) -- (-0.5,0.5);

    \draw[black!50,thick] (-0.25,-0.25) -- (-0.25,0.25);
    \draw[black!50,thick] (-0.125,-0.25) -- (-0.125,0.25);
    \draw[black!50,thick] (0,-0.25) -- (0,0.25);
    \draw[black!50,thick] (0.125,-0.25) -- (0.125,0.25);
    \draw[black!50,thick] (0.25,-0.25) -- (0.25,0.25);

    \node[black!70,above] at (-0.25,0.25) {\footnotesize{$E_0$}};
    \node[black!70,above] at (-0.125,0.25) {\footnotesize{$E_1$}};
    \node[black!70,above] at (0,0.25) {\footnotesize{$E_2$}};
    \node[black!70,above] at (0.125,0.25) {\footnotesize{$E_3$}};
    \node[black!70,above] at (0.25,0.25) {\footnotesize{$E_4$}};
    \node[black,below] at (-0.7,0.25) {\footnotesize{$1/4$}};
    \node[black,below] at (-0.7,-0.25) {\footnotesize{$-1/4$}};
    \node[black,below] at (-0.5,-0.5) {\footnotesize{$-1/2$}};
    \node[black,below] at (0.5,-0.5) {\footnotesize{$1/2$}};
    \node[white, below right] at (-0.25,0.05) {\footnotesize{$F_0$}};
    \node[white, below right] at (-0.125,0.05) {\footnotesize{$F_1$}};
    \node[white, below right] at (0,0.05) {\footnotesize{$F_2$}};
    \node[white, below right] at (0.125,0.05) {\footnotesize{$F_3$}};
\end{tikzpicture}
\caption{The sets $E_i$ and $F_i$.}
\label{fig:beforeT}
\end{subfigure}
\hfill
\begin{subfigure}{0.48\textwidth}
\centering
\begin{tikzpicture}[scale=4]
    \fill[black!30] (-0.5,-0.25) rectangle (-0.3333,0.25);
    \node[text=black!50, below] at (-0.4167,-0.25) {\footnotesize{$D_{-1}$}};

    \fill[black!30] (0.3333,-0.25) rectangle (0.5,0.25);
    \node[text=black!50, below] at (0.4167,-0.25) {\footnotesize{$D_{1}$}};

    \draw[black,thick,decorate,decoration={snake,amplitude=0.3,segment length=2mm}] (-0.37,0.15) -- (0.37,0.15);
    \draw[black,thick,decorate,decoration={snake,amplitude=0.3,segment length=2mm}] (0.37,0.23) -- (-0.37,0.23);
    \draw[black!70,thick,decorate,decoration={snake,amplitude=0.3,segment length=2mm}] (-0.37,0.23) -- (-0.37,0.15);

    \draw[black,thick,decorate,decoration={snake,amplitude=0.3,segment length=2mm}] (-0.37,0.02) -- (0.37,0.02);
    \draw[black,thick,decorate,decoration={snake,amplitude=0.3,segment length=2mm}] (0.37,0.1) -- (-0.37,0.1);

    \draw[black,thick,decorate,decoration={snake,amplitude=0.3,segment length=2mm}] (-0.37,-0.01) -- (0.37,-0.01);
    \draw[black,thick,decorate,decoration={snake,amplitude=0.3,segment length=2mm}] (0.37,-0.1) -- (-0.37,-0.1);

    \draw[black,thick,decorate,decoration={snake,amplitude=0.3,segment length=2mm}] (-0.37,-0.23) -- (0.37,-0.23);
    \draw[black,thick,decorate,decoration={snake,amplitude=0.3,segment length=2mm}] (0.37,-0.15) -- (-0.37,-0.15);
    \draw[black!70,thick,decorate,decoration={snake,amplitude=0.3,segment length=2mm}] (-0.37,-0.23) -- (-0.37,-0.15);

    \draw[black!70,thick,decorate,decoration={snake,amplitude=0.1,segment length=2mm}] (0.37,-0.15) -- (0.45,-0.10);
    \draw[black!70,thick,decorate,decoration={snake,amplitude=0.1,segment length=2mm}] (-0.4,0) -- (-0.48,0);
    \draw[black!70,thick,decorate,decoration={snake,amplitude=0.1,segment length=2mm}] (0.37,0.15) -- (0.46,0.16);

    \fill[color=black] (-0.25,0.15) rectangle (0.25,0.23);
    \fill[color=black] (-0.25,0.02) rectangle (0.25,0.1);
    \fill[color=black] (-0.25,-0.1) rectangle (0.25,-0.01);
    \fill[color=black] (-0.25,-0.23) rectangle (0.25,-0.15);

    \node[white] at (0,-0.2) {\footnotesize{$H_0$}};
    \node[white] at (0,-0.05) {\footnotesize{$H_1$}};
    \node[white] at (0,0.055) {\footnotesize{$H_2$}};
    \node[white] at (0,0.195) {\footnotesize{$H_3$}};

    \draw[black, thick, decorate, decoration={snake,amplitude=0.3,segment length=2mm}]
  (0.37,0.23)
  .. controls (0.5,0.125) and (0.5,0.125) ..
  (0.37,0.02);

    \draw[black, thick, decorate, decoration={snake,amplitude=0.3,segment length=2mm}]
  (0.37,0.15)
  .. controls (0.39,0.125) and (0.39,0.125) ..
  (0.37,0.1);

    \draw[black, thick, decorate, decoration={snake,amplitude=0.3,segment length=2mm}]
  (-0.37,-0.01)
  .. controls (-0.41,0) and (-0.41,0) ..
  (-0.37,0.02);

    \draw[black, thick, decorate, decoration={snake,amplitude=0.3,segment length=2mm}]
  (-0.37,0.1)
  .. controls (-0.5,0) and (-0.5,0) ..
  (-0.37,-0.1);

    \draw[black, thick, decorate, decoration={snake,amplitude=0.3,segment length=2mm}]
  (0.37,-0.01)
  .. controls (0.5,-0.15) and (0.5,-0.15) ..
  (0.37,-0.23);

    \draw[black, thick, decorate, decoration={snake,amplitude=0.3,segment length=2mm}]
  (0.37,-0.1)
  .. controls (0.39,-0.125) and (0.39,-0.125) ..
  (0.37,-0.15);

    \draw[dashed] (-0.7,0.25) -- (0.7,0.25);
    \draw[dashed] (-0.7,-0.25) -- (0.7,-0.25);
    \draw[dashed] (0.5,-0.5) -- (0.5,0.5);
    \draw[dashed] (-0.5,-0.5) -- (-0.5,0.5);

    \node[black,below] at (-0.7,0.25) {\footnotesize{$1/4$}};
    \node[black,below] at (-0.7,-0.25) {\footnotesize{$-1/4$}};
    \node[black,below] at (-0.5,-0.5) {\footnotesize{$-1/2$}};
    \node[black,below] at (0.5,-0.5) {\footnotesize{$1/2$}};

    \node[black!70, right] at (-0.45,0.32) {\footnotesize{$T(E_4)$}};
    \node[black!70, right] at (0.49,0.18) {\footnotesize{$T(E_3)$}};
    \node[black!70, right] at (-0.75,0) {\footnotesize{$T(E_2)$}};
    \node[black!70, right] at (0.49,-0.12) {\footnotesize{$T(E_1)$}};
    \node[black!70, right] at (-0.75,-0.2) {\footnotesize{$T(E_0)$}};
\end{tikzpicture}
\caption{The sets $T(E_i)$ and $H_i$. For a related illustration, see \cite[Figure 1]{KY80}.}
\label{fig:afterT}
\end{subfigure}

\caption{Comparison of subsets before and after transformation $T$ with $N=4$.}
\label{fig:combined}
\end{figure}
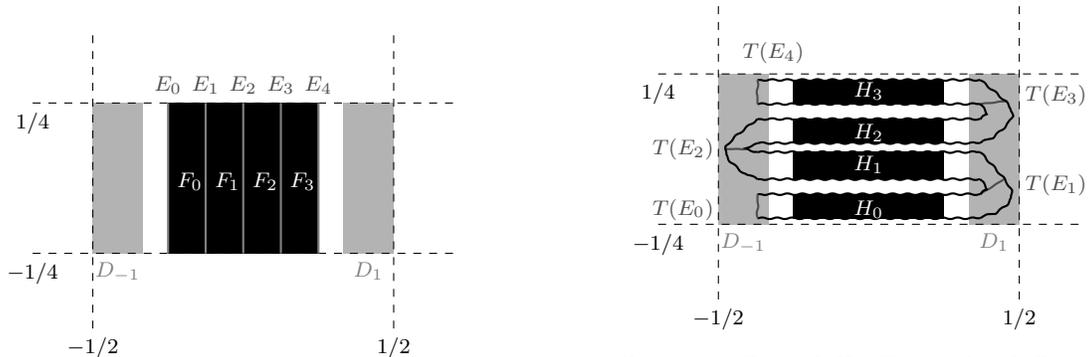
    We say that a homeomorphism $T \in \Homeo(\T^2)$ belongs to $\ClassOfMaps$ if 
    \begin{enumerate}
        \item $T([- 1/4, 1/4]^2) \subseteq (- 1/2, 1/2) \times (- 1/4, 1/4)$; \label{item:PropertyOfFMapsOne}
        \item $T(E_i) \subseteq D_{(-1)^{i}}$ for all $i = 0, \ldots, N$. \label{item:PropertyOfFMapsTwo}
    \end{enumerate}
    It is clear that there exists a smooth divergence-free (stretching and folding velocity field) $b \colon [0,1] \times \T^2 \to \R^2$ with $\supp(b) \subseteq [0,1] \times B_1(0)$ such that $X^b_1 \in \ClassOfMaps$. Then by defining $b_{\eps} \colon [0,1] \times B_{\eps}(0) \to \R^2$ as $b_{\eps}(t,x) = \eps b \left( t, {x}/{\eps} \right)$ and extending it by $0$ outside $B_{\eps}(0)$, we get $\supp (b_{\eps}) \subseteq [0,1] \times B_{\eps}(0)$. Additionally, 
    \[
     \sup_{x,y \in \T^2} \frac{|b_{\eps}(t,x) - b_{\eps}(t,y)|}{\metric(x,y)} \leq \| \nabla b(t,\cdot) \|_{C^0}, \quad  \sup_{x,y \in B_{\eps}(0)} \frac{\metric(x,y)}{\omega(\metric(x,y))} \to 0 \quad \text{as $\eps \to 0$,}
    \]
    from which we deduce that $\lim_{\eps \to 0} \| b_{\eps} \|_{C^0([0,1];c_\omega)} = 0$.
    We now show that there exists $\delta = \delta(\eps)$ such that if $T \in \Homeo(\T^2)$ satisfies \eqref{eq:ConditionToPreserveEntropy}, then $h_{\topo}(T) \geq \log N$. Since, up to a rescaling, the proof is identical for all $\eps > 0$, without loss of generality we assume $\eps = 1$. Recall the definition of $\ClassOfMaps$ and note that for any $T \in \ClassOfMaps$, there exists $\rho$ such that for any $\tilde{T} \in \Homeo(\T^2)$, $\sup_{x \in B_1(0)} \metric (T(x), \tilde{T}(x)) < \rho$ implies $\tilde{T} \in \ClassOfMaps$. Therefore, since $X_1^b \in \ClassOfMaps$, it is enough to show that for all $T \in \ClassOfMaps$ we have $h_{\topo}(T) \geq \log N$.
    We follow the strategy of \cite[Proposition 2]{KY80}.
     We define the sets
    \[
     F_i = \left[ - \frac{1}{4} + \frac{i}{2 N} , - \frac{1}{4} + \frac{i+1}{2 N} \right] \times \left[- \frac{1}{4}, \frac{1}{4}\right] \quad i = 0, \ldots, N-1
    \]
    and $H_i = [- 1/4, 1/4]^2 \cap T(F_i)$, $i = 0, \ldots, N-1$ as in Figure~\ref{fig:afterT}.
    We set $H = \cup_{i = 0}^{N-1} H_i$ and $\Lambda = \cap_{k \in \Z} T^k(H)$.
    We recall that $\Sigma_N$ denotes $\{ 0, \ldots, N-1\}^{\Z}$ and $\sigma \colon \Sigma_N \to \Sigma_N$ denotes the left shift map, namely $(\sigma(x))_i = x_{i+1}$ for any $i \in \Z$. We define $\pi \colon \Lambda \to \Sigma_N$ by
    \[
     (\pi(x))_j = i \quad \text{if $T^j(x) \in H_i$.}
    \]
    This map is continuous since the sets $H_i$ are mutually disjoint and compact.
    \begin{claim}
    The map $\pi$ is surjective. 
    \end{claim}
    \begin{proof}[Proof of Claim]
    We need to prove that for any $\{a_j\}_{j\in \mathbb Z}$, there exists $x \in \Lambda$ such that $T^j(x) \in H_{a_j}$. To this end, we prove that the intersection $\bigcap_{j\in\mathbb Z} T^{-j}(H_{a_j})$ is non-empty. By induction, we prove that for any finite sequence of indices $\{a_{0},\dots, a_k\}$ there exists a continuous curve $\gamma: [0,1] \to \T^2$ such that
    \begin{equation}
        \Image ( \gamma ) \subseteq H_{a_{0}}\cap \dots \cap T^k(H_{a_{k}}), \quad \gamma(0) \in E_0, \quad \gamma(1) \in E_N.
    \end{equation}
    Indeed,
    \begin{itemize}
        \item $k=0$: for any $a_0 = 0,\dots,N-1$, it follows from the definition of $H_i$ that such a curve exists.
        \item $k\geq 1$: by induction, assume that there exists a continuous $\gamma:[0,1] \to \T^2$ such that $\Image ( \gamma ) \subseteq
    H_{a_{1}} \cap \ldots \cap T^{k-1}(H_{a_{k}})$, $\gamma(0) \in E_0$ and $\gamma(1) \in E_N$.
        Then, we have 
        $\Image( T(\gamma) ) \subset T(H_{a_{1}}) \cap \ldots \cap T^{k}(H_{a_k})$ and since $\gamma$ intersects all the sets $E_i$, by the definition of $H_i$ there exist $0 \leq s_1 < s_2 \leq 1$ such that $T(\gamma(s_1)) \in E_0$, $T(\gamma(s_2)) \in E_N$ and $T(\gamma([s_1, s_2])) \subseteq H_{a_0}$. Letting $\tilde{\gamma}$ be the restriction of $T(\gamma)$ to $[s_1, s_2]$, we get 
        \begin{equation}
    \Image(\tilde{\gamma}) \subseteq H_{a_{0}} \cap \dots \cap T^k(H_{a_{k}}).
    \end{equation}
    \end{itemize}
    Since any finite intersection $H_{a_{0}}\cap \ldots \cap T^k(H_{a_{k}})$ contains a continuous curve, it is non-empty. Since $T \in \Homeo(\T^2)$, by taking pre-images, it is clear that any finite intersection of the form $T^{-k}(H_{a_{-k}})\cap \ldots \cap T^k(H_{a_{k}})$ is also non-empty.
    Finally, since $T \in \Homeo(\T^2)$ and $H_i$ is a closed set for any $i=0,\dots,N-1$, the sets $T^k(H_{a_k})$ are compact. Hence, by the finite intersection property applied to the non-empty finite intersections, we deduce that the intersection $\bigcap_{j\in\mathbb Z} T^{-j}(H_{a_j})$ is non-empty.
    \end{proof}
     To conclude the proof, since $\pi$ is continuous surjective and $\pi \circ T = \sigma \circ \pi$, $\sigma$ is a factor of $T |_{\Lambda}$. Moreover, $h_{\topo}(\sigma) = \log N$ by \cite[Theorem 7.11]{MR648108} and since $\sigma$ is a factor of $T |_{\Lambda}$, we deduce that $h_{\topo}(T |_{\Lambda}) \geq \log N$ by Lemma~\ref{lemma:EntropyOfFactors}. By Lemma~\ref{lemma:EntropyOnInvariantSubsets}, $h_{\topo}(T) \geq \log N$. 
\end{proof}

\begin{example}
\label{example:vel_field}
    By Lemma~\ref{lemma:pert_horseshoe_construction}, it is straightforward that there exists a velocity field in $L^1_{\per}(\R_+; c_\omega(\T^2; \R^2))$ such that its time-one flow map has infinite topological entropy.
    Indeed, for any  $N \in \mathbb N$, by  Lemma~\ref{lemma:pert_horseshoe_construction} there exists $b_N$ such that $\|b_N\|_{C^0([0,1];c_\omega)} \leq N^{-2}$, $\supp(b_N) \subseteq [0,1]\times B_{2^{-N-2}}(0)$ and its time-one flow map $X^{b_N}_1$  
    satisfies $h_{\topo} (X^{b_N}_1) \geq N$.
    
    Now, let $x_N = (-1/2, -1+2^{-N+1})$ and define $b \colon [0,1] \times \mathbb T^2 \to \mathbb R^2$ as $b(t,x) \coloneqq \sum_{N \in \mathbb N} b_N(t, x-x_N)$. Let $X^{b}_1$ be the time-one flow map of $b$ and observe that, since the velocity fields $b_N(\cdot - x_N)$ have mutually disjoint supports, $X^{b}_1 = x_N + X_1^{b_N}(\cdot - x_N)$ in $B_{2^{-N-2}}(x_N)$. For the same reason, $\overline{B_{2^{-N-2}}(x_N)}$ is invariant under $X_1^b$ and therefore Lemma~\ref{lemma:EntropyOnInvariantSubsets} implies $h_{\topo} (X^b_1) = +\infty$.
    We are left to prove that $b$ has the desired regularity; to do so, observe that 
\begin{equation}
    \|b(t,\cdot)\|_{\omega} \leq \sum_{N\in \mathbb N} \|b_N(t, \cdot)\|_{\omega} \leq \sum_{N\in \mathbb N} N^{-2} \leq C < \infty \quad \forall t \geq 0.
\end{equation}
\end{example}

\begin{example}\label{rmk:ExpIntGradient}
   Using the technique of Example~\ref{example:vel_field}, we can build an example of a velocity field $u$ whose time-one flow map has infinite topological entropy and whose gradient is exponentially integrable, satisfying
   \begin{equation}
   \label{eq:expint}
\sup_{t \in [0,1]} \int_{\mathbb T^2} \exp(\beta|\nabla u(t,x)|)  \ dx < +\infty \text{ for some } \beta > 0.   
   \end{equation}
    Indeed, for this class of velocity fields, one can prove the analogue of the first statement of Lemma~\ref{lemma:pert_horseshoe_construction}, because for any $\eps > 0$, the rescaled velocity field $b_\eps (t,x) = \eps b(t,x/\eps)$ satisfies
\begin{align}
    \int_{B_{\eps}(0)} \exp(\beta |\nabla b_\eps(t,x)|) \ dx = \eps^2 \int_{B_{1}(0)} \exp(\beta|\nabla b(t,x)|)  \ dx,
\end{align}
which vanishes as $\eps \to 0$. Therefore, we can repeat the same construction in Example~\ref{example:vel_field}. 
A genericity result for such velocity fields, like the one stated for the Osgood case in Theorem~\ref{thm:main}, requires an adaptation of the proof presented in Section~\ref{sec:ProofOfMainThm}. Observe that if $u$ satisfies \eqref{eq:expint}, then its flow admits Sobolev estimates \cite{EBQHN21}, see \cite{MR4373167} for a related result on the vanishing viscosity limit in Yudovich's class.
\end{example}

\section{Proof of Theorem~\ref{thm:main}}\label{sec:ProofOfMainThm}

The next lemma shows that any $L^1_{\per} c_{\omega}$ velocity field can be perturbed so that it has a periodic point.

\begin{lemma}
\label{lemma:aux}
Let $\varepsilon > 0$ and $b \in L^1_{\per}(\R_+ ; c_{\omega}(\T^2; \R^2))$. Then, there exist $\tilde b \in L^1_{\per}(\R_+ ; c_{\omega}(\T^2; \R^2))$, $x \in \T^2$ and an integer $N \geq 1$ such that 
\begin{equation}\label{eq:ItIsAPertrubation}
    \| b-\tilde b\|_{L^1_{\per}c_{\omega}} < \varepsilon, \quad \text{and} \quad X_N^{\tilde{b}}(x) = x.
\end{equation}
\end{lemma}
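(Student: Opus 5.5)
The idea is to add a small velocity field, concentrated in time on a short interval, that pushes one chosen point back onto its starting position after $N$ periods. Fix $\varepsilon>0$ and $b\in L^1_{\per}c_\omega$, and write $T:=X_1^b$, a homeomorphism of $\T^2$ by the Osgood assumption. Pick any $x_0\in\T^2$. By compactness of $\T^2$, the orbit $\{T^n(x_0)\}_{n\ge0}$ has an accumulation point. Hence for any $\eta>0$ there are integers $0\le m<n$ with $\metric(T^m(x_0),T^n(x_0))<\eta$. Set $y:=T^m(x_0)$ and $N:=n-m$, so that $\metric(T^N(y),y)<\eta$. It remains to correct the small displacement $v:=y-T^N(y)\in\R^2$ (small representative) by a perturbation of size $O(\varepsilon)$ in $L^1_{\per}c_\omega$.

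The perturbation will act only during the last period, on a short time window near $t=1$. Let $z_t:=X_t^b(T^{N-1}(y))$ for $t\in[0,1]$, so $z_1=T^N(y)$. Choose a smooth cutoff $\chi\in C_c^\infty(\R^2)$ with $\chi\equiv1$ on $B_1(0)$, and a time profile $\phi\in C_c^\infty((1-\tau,1))$ with $\int\phi=1$. Define on $[0,1]$
\[
\tilde b(t,x):=b(t,x)+c(t)\,\phi(t)\,\chi(x-z_t)\,v,
\]
extended $1$-periodically. The scalar $c(t)$ will be close to $1$ and is chosen below so that the endpoint lands exactly. Because the same perturbation would otherwise act in every period, we cannot simply apply it once. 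Instead, the periodic point is sought directly as a fixed point of $X_N^{\tilde b}$. Concretely, consider the one-parameter family $\tilde b_s:=b+s\,\phi(t)\chi(x-z_t)\,w$ with $w\in\R^2$, $|w|\le 2\eta$. The map $(w,x)\mapsto X_N^{\tilde b_w}(x)-x$ depends continuously on $(w,x)$, by Osgood stability of flows under perturbations in $L^1c_\omega$.

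The key step is a degree (Brouwer) argument. For $\eta$ small, consider $\Phi(w):=X_N^{\tilde b_w}(y)-y$. At $w$ with $|w|=2\eta$, compare $\Phi(w)$ with $w+(T^N(y)-y)$. The perturbed field equals $b+w\phi$ near the trajectory during the window $(1-\tau,1)$ of the $N$-th period. It also acts, only slightly, on earlier periods' trajectories if those pass near $z_t$. This is controlled by choosing $\tau$ small, since $\int|\phi|\cdot\|\chi w\|_\omega\lesssim\eta$. The flow's modulus of continuity then propagates this as an error $o(\eta)$ in the regime $\tau\to0$ with $\chi$ rescaled. Thus $\Phi(w)=w-v+o(\eta)$ on $|w|=2\eta$, while $|v|<\eta$. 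So $\Phi$ is homotopic on the circle to the identity and has a zero $w^\ast$, which gives $X_N^{\tilde b_{w^\ast}}(y)=y$. Finally, $\|\tilde b-b\|_{L^1_{\per}c_\omega}\le\int|\phi|\,dt\,\|\chi\|_\omega\,2\eta<\varepsilon$ for $\eta$ small. Here $\chi\in C^\infty_c\subset c_\omega$, so $\tilde b$ stays in the space; the translated cutoff in $t$ is measurable and bounded in $c_\omega$.

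The main obstacle is quantifying the $o(\eta)$ error. The perturbation also displaces the trajectory in earlier periods, and Osgood flows only have a modulus of continuity, with error $\Omega(\text{size})$ rather than linear. This error must be dominated by $\eta$. To handle it I would make the error scale with $\tau$ independently of $\eta$. Use a field of size $|w|$ supported in a window of length $\tau$, so the perturbation has $L^1_tL^\infty_x$ norm $|w|$ but acts only in the windows $(k+1-\tau,k+1)$. In the windows before the last, I would choose the spatial support of $\chi$ to avoid the earlier trajectory points $X^b_{k+1-\tau}(\cdot)$, $k<N-1$. This works when these points are distinct from the last one, which holds if $y$ is not already periodic; if $y$ is periodic, take $\tilde b=b$. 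With that choice the earlier periods are unperturbed near the relevant orbit, and the only remaining error comes from the $\tau$-window itself, which tends to $0$ as $\tau\to0$.
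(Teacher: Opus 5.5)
There is a genuine gap where you claim the spatial support of $\chi$ can avoid the earlier orbit points because they are distinct from $T^N(y)$. Distinctness only gives $s:=\min_{1\le j\le N-1}\metric(T^j(y),T^N(y))>0$, with no lower bound relative to $\eta$. Your construction needs $s$ to exceed the cutoff radius, and that radius cannot be small. For the last-window correction to act as translation by $w$ for every $w$ on $|w|=2\eta$, the set $\{\chi(\cdot-z_t)=1\}$ must contain the displaced trajectory, so its radius must be at least about $2\eta$.

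Shrinking the cutoff to radius $\lambda$, for instance letting it follow the moving point, does not help. A correction of total displacement $|w|$ that must vanish within distance $\lambda$ of the moved point costs about $|w|/\omega(\lambda)$ in $L^1_{\per}c_\omega$. That is not small once $\omega(\lambda)\ll|w|$.

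Since $y$ and $N$ come from an arbitrary close return $m<n$ chosen after $\eta$, some intermediate iterate $T^j(y)$ may lie much closer than $\eta$ to $T^N(y)$. Decreasing $\eta$ afterwards does not help, because it changes $y$ and $N$. In that situation the perturbation also acts, with strength comparable to $|w|$, on the orbit in earlier periods. The estimate $\Phi(w)=w-v+o(\eta)$ then has no justification, and the Brouwer argument collapses with it.

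The missing idea is the paper's selection of the return. Among the first $M$ iterates of a point with a close return, take the pair $(\alpha,\beta)$ minimizing $\metric(X_i^b(y),X_j^b(y))$. Set $x=X_\alpha^b(y)$, $N=\beta-\alpha$ and $\rho=\metric(x,X_N^b(x))$. Minimality then forces $\metric(X_n^b(x),x)\ge\rho$ and $\metric(X_n^b(x),X_N^b(x))\ge\rho$ for $1\le n\le N-1$. So the intermediate iterates avoid $B_{3\rho/4}$ of the midpoint of $x$ and $X_N^b(x)$, and the correction can be localized there.

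Even with this selection, your ball of radius about $2\eta>\rho$ centered at $T^N(y)$ is still not admissible. You would have to center the support at the midpoint with radius $3\rho/4$, and run the degree argument on a small circle around $w=v$ rather than on $|w|=2\eta$.

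The paper also avoids the degree argument entirely. It first sets $b$ to zero on a short final time window, which is a small perturbation in $L^1_{\per}c_\omega$, so the orbit is frozen there. It then applies an explicit rotation by $\pi$ about the midpoint, whose $L^1_{\per}c_\omega$ norm is $\lesssim\rho+\rho/\omega(\rho)$. This rotation sends $X_N^b(x)$ exactly to $x$ and does not touch the other iterates.
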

 
\begin{proof}[Proof of Lemma~\ref{lemma:aux}]
If there exists some $x \in \T^2$ and $N \geq 1$ such that $X_N^b(x) = x$, the proof is finished. Therefore, we assume this is not the case.
Up to perturbing the velocity field, we can assume that $b$ is zero in the time interval $[1-\eta,1]$ for some small $\eta > 0$.
Let $\rho_0 > 0$ be a small number depending on $\eps$ and $\eta$ to be selected. \\
\begin{claim}
There exist $0 < \rho \leq \rho_0$, $x \in \T^2$ and an integer $N \geq 1$ such that 
\begin{equation}\label{eq:OutsideOfBalls}
 \metric(x, X_N^b(x)) \leq \rho, \quad \metric(X_{n}^{b}(x), x) \geq \rho , \quad \metric(X_{n}^{b}(x), X_{N}^{b}(x)) \geq \rho \quad \forall n = 1, \ldots, N-1.
 \end{equation}
\end{claim}
 \begin{proof}[Proof of Claim]
 Without loss of generality, up to choosing a smaller $\rho_0$, assume $\rho_0$ is smaller than the injectivity radius of $\T^2$.
 By compactness, there exists $y \in \T^2$ and $M \geq 1$ such that $\metric(y, X_M^b(y)) < \rho_0$. Take now $(\alpha,\beta) = \argmin_{\substack{0 \leq i < j \leq M}} \metric(X_{i}^b(y), X_j^b(y))$ and set $x \coloneqq X_\alpha^b(y)$ and $N \coloneqq \beta-\alpha$. Choose $\rho \coloneqq \metric(x, X_N^b(x))$, which makes the first inequality of \eqref{eq:OutsideOfBalls} true. Moreover, since $(\alpha,\beta)$ minimizes $(i,j) \mapsto \metric (X_i^b(y), X_j^b(y))$, the second and third conditions of \eqref{eq:OutsideOfBalls} hold. 
\end{proof}

Set $x^{\prime} = X_N^b(x)$ and note that due to the claim
\begin{equation}\label{eq:NoOtherPointsTouched}
    X_n^b(x) \notin B_{\rho}(x) \cup B_{\rho}(x^{\prime}) \supseteq B_{3\rho/4} \left( \frac{x + x^{\prime}}{2} \right) \quad \forall n = 1, \ldots, N-1.
\end{equation}
where $\frac{x + x^{\prime}}{2}$ denotes the midpoint of the unique minimizing geodesic between $x$ and $x^{\prime}$.
Due to this and the fact that the velocity field $b$ vanishes in the interval of times $[1-\eta,1]$, we can make $x$ a periodic point of another flow associated with $\tilde{b}$, a perturbation of $b$ in $[1-\eta,1] \times B_{3\rho/4} ( \frac{x + x^{\prime}}{2} )$.

 Let $\chi$ be a smooth cutoff such that $\chi = 1$ in $B_{\rho/2}(0)$ and $\chi = 0$ in $B_{3 \rho / 4}(0)^c$ and $\| \nabla \chi \|_{L^{\infty}} \leq C {\rho}^{-1}$.
 Take $\Omega \colon [0,1] \to \R$ such that $\supp \Omega \subseteq (1-\eta,1)$, $\| \Omega \|_{L^{\infty}} \leq C \eta^{-1}$ and $\int_0^1 \Omega \, dt = \pi$. Define $b_{\eps} \colon [0,1] \times \T^2 \to \R^2$ (which acts as a rotation by $\pi$ centered in $\frac{x + x^{\prime}}{2}$) by 
 \begin{equation}\label{eq:DefinitionOfPerturbationBloc}
    b_\varepsilon(t,y) \coloneqq \Omega(t) \chi \left(y-\frac{x+x'}{2}\right) J \left(y-\frac{x+x'}{2}\right), \quad J \coloneqq \begin{pmatrix}
    0 & -1 \\
    1 & 0
    \end{pmatrix}.
\end{equation}
Note that $\| b_{\eps}(t, \cdot) \|_{\omega} \leq C \eta^{-1} (\rho + \frac{\rho}{\omega(\rho)}) \leq \eps$ provided $\rho_0$ is sufficiently small since $\lim_{s \to 0} \frac{s}{\omega(s)} = 0$ and therefore by defining $\tilde{b} = b + b_{\eps}$, \eqref{eq:ItIsAPertrubation} holds. Since $\supp b_{\eps} \subseteq (1-\eta,1) \times B_{3 \rho/4} ( \frac{x + x^{\prime}}{2} )$ and due to \eqref{eq:NoOtherPointsTouched}, $X_t^b(x) = X_t^{\tilde{b}}(x)$ for all $t \in [0, N- \eta]$. Since $b_{\eps}$ generates a flow permuting $x$ and $X_N^b(x) = X_{N-\eta}^b(x)$, we obtain that $X_N^{\tilde{b}}(x) = x$. 
\end{proof}

\begin{lemma}\label{lemma:GoodScaleBadInterval}
    Let $b \in L^1_{\per}(\R_+; c_{\omega}(\T^2; \R^2))$ and $\eps > 0$ arbitrary. Then there exists $I = I(b,\eps) \subseteq [0,1]$ and $r = r(b, \eps) > 0$ such that
    \begin{equation}
        \sup_{\metric(x,y) < r} \dfrac{|b(t,x) - b(t,y)|}{\omega(\metric(x,y))} < \eps \quad \forall t \in I \quad \text{and} \quad \int_{[0,1] \setminus I} \| b(t, \cdot) \|_{C_{\omega}} \, dt < \eps.
    \end{equation}
\end{lemma}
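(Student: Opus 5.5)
The plan is to combine absolute continuity of the Bochner-type integral with a pointwise-in-time use of the vanishing $\omega$-oscillation, and then upgrade from pointwise to uniform on a large set by an Egorov-type argument.

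For each $t \in [0,1]$ with $b(t,\cdot) \in c_{\omega}$ (true for a.e. $t$), define the oscillation function
\[
g_r(t) \coloneqq \sup_{0<\metric(x,y) < r} \dfrac{|b(t,x) - b(t,y)|}{\omega(\metric(x,y))}.
\]
By the characterization of $c_{\omega}$ recalled in Subsection~\ref{subsec:Notation}, $g_r(t) \to 0$ as $r \to 0$ for a.e. $t$. Moreover $g_r(t)$ is monotone nondecreasing in $r$ and bounded by $[b(t,\cdot)]_{\omega} \le \|b(t,\cdot)\|_{\omega}$.

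The first step is to choose $\kappa > 0$ such that every measurable $A \subseteq [0,1]$ with $|A| < \kappa$ satisfies $\int_A \|b(t,\cdot)\|_{\omega}\,dt < \eps$. This is possible because $t \mapsto \|b(t,\cdot)\|_{\omega}$ is integrable, so its integral is absolutely continuous with respect to Lebesgue measure.

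The second step is to restrict $r$ to the sequence $r_k = 1/k$. Then $g_{r_k}(t) \downarrow 0$ for a.e. $t$. By Egorov's theorem there is a measurable set $I \subseteq [0,1]$ with $|[0,1]\setminus I| < \kappa$ on which $g_{r_k} \to 0$ uniformly. Choosing $k$ large so that $\sup_{t\in I} g_{r_k}(t) < \eps$ and setting $r = r_k$ gives both inequalities. In fact, since the convergence is monotone, it suffices to take $I = \{t : g_{r_k}(t) < \eps\}$ for $k$ large. These sets increase to a full-measure set, so their measure tends to $1$ by continuity of measure, which avoids Egorov entirely.

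The main obstacle is measurability of $t \mapsto g_r(t)$, which is needed both to define $I$ as a measurable set and to apply continuity of measure. I would handle it as follows.
\begin{itemize}
\item For fixed $x,y$, the map $t \mapsto |b(t,x)-b(t,y)|/\omega(\metric(x,y))$ is measurable, since $b$ is measurable as an $L^1$ map into $c_{\omega}$ and point evaluations are continuous on $c_{\omega}$.
\item For fixed $t$, the quotient is continuous in $(x,y)$ off the diagonal.
\item Hence the supremum over the open set $\{0<\metric(x,y)<r\}$ equals the supremum over a countable dense subset of pairs, so $g_r$ is a countable supremum of measurable functions and is therefore measurable.
\end{itemize}
If $b$ is only an equivalence class, I would fix a representative and discard the null set of times where $b(t,\cdot)\notin c_\omega$ by placing it in $[0,1]\setminus I$, which does not affect the integral bound.
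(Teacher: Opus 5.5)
Your proposal is correct and follows essentially the same route as the paper: absolute continuity of $A\mapsto\int_A\|b(t,\cdot)\|_{\omega}\,dt$ gives the measure threshold, and the good set is $\{t : \text{oscillation at scale } r < \eps\}$ (the paper's $\{t : r_t > r\}$), whose complement has small measure once $r$ is small. Your countable-dense-supremum argument supplies the measurability that the paper only asserts (``the map $t\mapsto r_t$ can be taken measurable''), and the monotone continuity-of-measure step rightly makes Egorov unnecessary.
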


\begin{proof}
    By uniform integrability, there exists $\delta> 0$ such that for any $I \subseteq [0,1]$ with $\Leb^1([0,1] \setminus I) < \delta$, we have $\int_{[0,1] \setminus I} \| b(t, \cdot) \|_{\omega} \, dt < \eps$. For any $t \in [0,1]$, $\lim_{r \to 0} \sup_{0 < \metric (x,y) < r} \frac{|b(t,x) - b(t,y)|}{\omega(\metric(x,y))} = 0$ and therefore there exists $r_t > 0$ such that $\sup_{0 < \metric (x,y) < r_t} \frac{|b(t,x) - b(t,y)|}{\omega(\metric(x,y))} < \eps$. The map $t \mapsto r_t$ can be taken measurable and therefore it suffices to take $I = \{ t \in [0,1] : r_t > r \}$ with $r$ small enough so that $\Leb^1([0,1] \setminus I) < \delta$.
\end{proof}

\begin{proof}[Proof of Theorem~\ref{thm:main}]
Since countable intersections of residual sets are residual, it suffices to show that for any $K \geq 1$, there exists a residual set $\mathcal{X}_K \subseteq L^1_{\per}(\R_+; c_{\omega}(\T^2; \R^2))$ such that for all $b \in \mathcal{X}_K$, we have $h_{\topo}(X_1^b) \geq K$. 
To prove this, we will show that for arbitrary $\eps > 0$ and $b \in L^1_{\per}(\R_+; c_{\omega}(\T^2; \R^2))$, there exist $\delta > 0$ and $v \in L^1_{\per}(\R_+; c_{\omega}(\T^2; \R^2))$ with $\| b - v \|_{L^1_{\per} c_{\omega}} < \eps$ such that for all $w \in L^1_{\per}(\R_+; c_{\omega}(\T^2; \R^2))$ with $\| w - v \|_{L^1_{\per} c_{\omega}} < \delta$ it holds that $h_{\topo}(X_1^w) \geq K$.

\textbf{Construction of $v$:} Without loss of generality, by Lemma~\ref{lemma:aux}, we may assume that $b$ has a periodic point $x \in \T^2$.
Let $N \geq 1$ be the period of $x$, i.e. the smallest positive integer such that $X_N^b(x) = x$.
Without loss of generality, up to translating the velocity field $b$ in $\T^2$, we may assume $x = 0$.
\begin{claim}\label{claim:FirstPerturbation}
    There exists $u \in L^1_{\per}(\R_+; c_{\omega}(\T^2; \R^2))$ and $r > 0$ such that $\| u - b\|_{L^1_{\per} c_{\omega}} < \eps / 2$ and 
    \begin{equation}\label{eq:b3def}
        u(t,y) = u(t,X_{t}^{b}(0)) \quad \forall y \in B_{r/2}(X_{t}^{b}(0)), \quad \forall t \in [0,N].
    \end{equation}
\end{claim}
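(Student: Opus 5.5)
We construct $u$ by freezing $b$ to its value along the periodic orbit inside a small tube around that orbit. Let $\gamma(t) \coloneqq X_t^b(0)$, $t \in [0,N]$; since $X_N^b(0)=0$ and $b$ is $1$-periodic, we have $\gamma(t+N)=\gamma(t)$. Fix a smooth radial cutoff $\psi_r\colon \T^2 \to [0,1]$ with $\psi_r = 1$ on $B_{r/2}(0)$, $\psi_r = 0$ outside $B_r(0)$ and $|\nabla \psi_r| \leq C r^{-1}$. We set
\[
 u(t,y) \coloneqq b(t,y) + \psi_r\bigl(y - \gamma(t)\bigr)\bigl(b(t,\gamma(t)) - b(t,y)\bigr), \qquad t \in [0,N],
\]
and we must then ensure that $u$ is $1$-periodic. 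The orbit $\gamma$ has period $N$ in time while $u$ has period $1$. So on each time slab $t \in [k,k+1)$ we must freeze $b$ around all the points $\gamma(t+j)$, $j=0,\dots,N-1$, simultaneously, i.e.\ define $u(t,\cdot)$ for $t\in[0,1)$ with $N$ cutoffs centered at $\gamma(t+j)$. Since $N$ is the minimal period, the points $\gamma(t+j)$, $j=0,\dots,N-1$, are pairwise distinct for each $t$: if $\gamma(t+i)=\gamma(t+j)$, then uniqueness of the flow and periodicity give $X^b_{j-i}(0)=0$, contradicting minimality. By continuity and compactness, $\min_{t\in[0,1]}\min_{i\neq j}\metric(\gamma(t+i),\gamma(t+j)) \eqqcolon 4r_0 > 0$. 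For $r < r_0$ the cutoffs have disjoint supports, so \eqref{eq:b3def} holds exactly, since on $B_{r/2}(\gamma(t))$ we get $u(t,y)=b(t,\gamma(t))$.

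It remains to estimate $\|u-b\|_{L^1_{\per}c_\omega}$, and this is the main step. Write $g(t,y) \coloneqq \psi_r(y-\gamma(t))(b(t,\gamma(t))-b(t,y))$. Its sup norm is at most $\sup_{\metric(x,y)<r}|b(t,x)-b(t,y)| \le \omega(r)\,\|b(t)\|_\omega$, which is small. For the seminorm, take $y,z$ in the support. If $\metric(y,z)\ge r$, then
\[
 |g(y)-g(z)| \le 2\sup|g| \le 2\omega(r)\,\sup_{\metric<r}\frac{|b(x)-b(x')|}{\omega(\metric(x,x'))} \le 2\omega(\metric(y,z))\,\sup_{\metric<r}\frac{|b(x)-b(x')|}{\omega(\metric(x,x'))}.
\]
If $\metric(y,z)<r$, the product rule gives
\[
 |g(y)-g(z)| \le C r^{-1}\metric(y,z)\,\omega(2r)\,\theta_r(t) + |b(t,y)-b(t,z)|,
\]
where $\theta_r(t) \coloneqq \sup_{0<\metric(x,x')<2r}\frac{|b(t,x)-b(t,x')|}{\omega(\metric(x,x'))}$. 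Concavity of $\omega$ with $\omega(0)=0$ yields $\metric(y,z)/r \le \omega(\metric(y,z))/\omega(r)$ for $\metric(y,z)\le r$, and $\omega(2r)\le 2\omega(r)$. Hence $[g(t)]_\omega \le C\,\theta_r(t)$ and $\|g(t)\|_\omega \le C\,\theta_r(t)$ with $C$ independent of $r$.

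Lemma~\ref{lemma:GoodScaleBadInterval}, applied with a small $\eps'$, gives $\theta_r(t)<\eps'$ for $t\in I$ once $2r \le r(b,\eps')$. On $[0,1]\setminus I$ we only use $\theta_r(t)\le\|b(t)\|_\omega$, whose integral is less than $\eps'$. Therefore $\int_0^1\|u-b\|_\omega\,dt \le C N(\eps'+\eps') < \eps/2$ for $\eps'$ small; the factor $N$ accounts for the $N$ disjoint cutoffs, and their seminorms combine with a constant factor because the supports are separated. As an alternative, one could apply the vanishing-oscillation argument directly for each $t$ and conclude by dominated convergence: $\theta_r(t)\to0$ pointwise and $\theta_r(t)\le\|b(t)\|_\omega \in L^1$.

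Finally, membership in $c_\omega$ for a.e.\ $t$ follows because $u(t)$ is $b(t)$ plus a function whose $\omega$-oscillation at small scales is controlled by that of $b(t)$, which vanishes. Measurability in $t$ follows from the continuity of $\gamma$.
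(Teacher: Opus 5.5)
Your proposal is correct and follows essentially the same route as the paper: freeze $b$ at its value along the periodic orbit using cutoffs in $N$ disjoint tubes on each unit time slab, then bound the $\omega$-norm of the difference via the product rule, the monotonicity of $\omega(s)/s$, and the good-scale/bad-interval splitting of Lemma~\ref{lemma:GoodScaleBadInterval}. Your version is slightly more careful than the paper's: you justify disjointness of the tubes through minimality of the period and backward uniqueness, you handle pairs of points at distance $\geq r$ or outside the ball, and you check membership in $c_\omega$. Your dominated-convergence alternative would also let you bypass Lemma~\ref{lemma:GoodScaleBadInterval} altogether.
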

\begin{proof}[Proof of Claim]
By Lemma~\ref{lemma:GoodScaleBadInterval}, there exist $I\subseteq [0,1]$ and $r > 0$ such that
\begin{equation}\label{eq:ApplicationOfLemmaGoodScaleBadInterval}
        \sup_{\metric(y,z) < r} \dfrac{|b(t,y) - b(t,z)|}{\omega(\metric(y,z))} < 10^{-3} \eps \quad \forall t \in I \quad \text{and} \quad \int_{[0,1] \setminus I} \| b(t, \cdot) \|_{C_{\omega}} \, dt < 10^{-3} \eps.
\end{equation}
Moreover, we choose $r$ small enough so that $\omega(r) < 10^{-2} \eps$ and such that the tubular sets 
\[
 T_i = \{ (t,y) \in [0,1] \times \T^2 : y \in B_r(X^b_{t+i}(0)) \}, \, i = 0, \ldots, N-1 \quad \text{are mutually disjoint.}
\]
We let $\chi$ be a smooth cutoff such that $\chi = 1$ in $B_{r/2}(0)$, $\chi = 0$ in $B_{r}(0)^c$, and $\| \nabla \chi \|_{L^{\infty}} \leq 9 r^{-1}$.
Define $u \in L^1_{\per}(\R_+; c_{\omega}(\T^2; \R^2))$ by
\begin{equation}
    u(t,y) =
    \left\{
        \begin{array}{ll}
        b(t,X^{b}_{t+i}(0)) \chi(y-X^{b}_{t+i}(0)) + 
        b(t,y) (1-\chi(y-X^{b}_{t+i}(0))) & \text{if $(t,y) \in T_i$;} \\
        b(t,y) & \text{otherwise.}
        \end{array}
    \right.
\end{equation}
To prove that $[ u(t, \cdot) - b(t, \cdot) ]_{\omega} < 10^{-1} \eps$, it suffices to verify that $[ u(t, \cdot) - b(t, \cdot) ]_{C_{\omega}(B_{r}(X^{b}_{t+i}(0)))} < 10^{-2} \eps$ since $u(t,\cdot) = b(t,\cdot)$ on $B_{r}(X^{b}_{t+i}(0))^c$.
A calculation shows that 
\begin{align*}
    (u-b)(t,y)-(u-b)(t,z) &= \Big( b(t,X^{b}_{t+i}(0)) - b (t,z) \Big)  \Big( \chi(y-X^{b}_{t+i}(0)) - \chi(z-X^{b}_{t+i}(0)) \Big) \\
    &\qquad + ( b(t,z) - b(t,y) ) \chi(y-X^{b}_{t+i}(0)) \quad \forall y,z \in B_{r}(X^{b}_{t+i}(0)).
\end{align*}
Hence, by Equation~\eqref{eq:ApplicationOfLemmaGoodScaleBadInterval}, for all $t \in I$ and all $y,z \in B_{r}(X^{b}_{t}(0))$,
\begin{align*}
    |(u-b)(t,y)-(u-b)(t,z)| &\leq 9 \cdot 10^{-3} r^{-1} \eps \omega(r) \metric(y,z) + 10^{-3} \eps \omega(\metric(y,z))  \leq 10^{-2} \eps \omega(\metric(y,z)),
\end{align*}
where we exploited the fact that $s \mapsto \omega(s)/s$ is decreasing
and thus $[ u(t, \cdot) - b(t, \cdot) ]_{{\omega}} < 10^{-1} \eps$ for all $t \in I$.
Moreover, it is clear that $\| u(t, \cdot) - b(t, \cdot) \|_{C^0} \leq \eps \omega (r) \leq 10^{-1} \eps$ for all $t \in I$ and therefore $\| u(t, \cdot) - b(t, \cdot) \|_{{\omega}} \leq \eps/4$ for all $t \in I$.
Following the same strategy, one can show that $\| u(t, \cdot) - b(t, \cdot) \|_{{\omega}} \leq 10^2 \| b(t, \cdot) \|_{{\omega}}$ for all $t \in [0,1] \setminus I$.
Therefore, using Equation ~\eqref{eq:ApplicationOfLemmaGoodScaleBadInterval},
\[
 \| u - b \|_{L^1_{\per} c_{\omega}} = \int_{I} \| u(t, \cdot) - b(t, \cdot) \|_{{\omega}} \, dt + \int_{[0,1] \setminus I} \| u(t, \cdot) - b(t, \cdot) \|_{{\omega}} \, dt < \frac{\eps}{4} + \frac{\eps}{10} < \frac{\eps}{2}.
\]
\end{proof}
Equation~\eqref{eq:b3def} implies $X_{t}^{u}(y) = X_{t}^{u}(0) + y$ for all $y \in B_{r/2}(0)$, i.e. locally in $B_{r/2}(0)$, the flow map of $u$ is a translation. 
Recall that $N$ is the period of $x = 0$.
By Lemma~\ref{lemma:pert_horseshoe_construction}, there exists a velocity field $h \colon [0,1] \times \T^2 \to \R^2$ such that $\| h \|_{L^1_{\per} c_{\omega}} < \eps/4$ and $\supp(h) \subseteq [0,1] \times B_{r/4}(0)$ with $h_{\topo}(X_1^h) > N K$. Let $v \in L^1_{\per}(\R_+; c_{\omega}(\T^2; \R^2))$ be defined on $[0,1] \times \T^2$ by
\[
 v(t,y) \coloneqq
 \left\{
    \begin{array}{ll}
     u(t,y) + \frac{1}{N} h\left(\dfrac{t+i}{N}, y - X_{t+i}^{u}(0)\right) &\text{if $(t,y) \in T_i$;} \\
     u(t,y) & \text{otherwise.}
    \end{array}
 \right.
\]
Note that this implies $v(t,y) = u(t,y) + N^{-1} h\left(t/N, y - X_{t}^{u}(0)\right)$ for all $t \in \R_+$ and all $y \in B_{r/4}(X^b_t(0))$. Indeed, all the functions involved are $N$-periodic, so it suffices to verify it for $t \in [0,N)$. Using that $y \in B_{r/4}(X_t(0))$ implies $(t - \lfloor t \rfloor, y) \in T_{\lfloor t \rfloor}$ for all $t \in [0,N)$, combined with the $1$-periodicity of $u$ and $v$, we get the identity. 
This expression for $v$ combined with Equation~\eqref{eq:b3def} implies that 
$$X_t^v (y) = X_t^{u}(0) + X_{\frac{t}{N}}^h(y) \quad \forall (t,y) \in \R_+ \times B_{r/4}(0)$$
and since $X_{N}^{u}(0) = 0$, we get $X_N^{v} = X_{1}^h$ in $B_{r/4}(0)$. From Lemma~\ref{lemma:pert_horseshoe_construction}, we deduce that $h_{\topo}(X_N^{v}) > NK$. In particular, $h_{\topo}(X_1^{v}) > K$ by Lemma~\ref{lemma:EntropyOfIterations}.
Finally, since $\| h \|_{L^1_{\per} c_{\omega}} <\eps/4$, we get $\| v - u \|_{L^1_{\per} c_{\omega}} < \eps/2$ and therefore $\| v - b \|_{L^1_{\per} c_{\omega}} < \eps$.
\\
\textbf{Lower bound on entropy in a neighborhood of $v$:}
Recall that $X_N^v = X_1^h$ in $B_{r/4}(0)$ and therefore Lemma~\ref{lemma:pert_horseshoe_construction} implies the existence of $\tilde{\delta} > 0$ such that if $\sup_{y \in B_{r/4}(0)} \metric( T(y) , X_N^v(y) ) < \tilde{\delta}$ then $h_{\topo}(T) > NK$.
Let $w \in L^1_{\per}(\R_+; c_{\omega}(\T^2; \R^2))$ be arbitrary and observe that, combining Equation~\eqref{eq:flowmap} for $X_t^v$ and $X^w_t$, adding and subtracting $v(t,X^w_t)$ and exploiting the Osgood regularity of $v$ and $w$, we have
\begin{equation}
\label{eq:integralBLS}
\metric(X^w_t(x), X^v_t(x)) \leq \int_0^t\omega(\metric(X^w_s(x), X^v_s(x))) \|v(s,\cdot)\|_\omega \ ds  + \int_0^t\|w(s,\cdot)-v(s,\cdot)\|_\omega \ ds.
\end{equation}
Finally, by the Bihari-LaSalle inequality, Equation~\eqref{eq:integralBLS} gives
\begin{equation}
    \metric(X_t^{w}(x), X_t^{v}(x)) \leq G^{-1} \left( G \left( \int_0^t \| w(s, \cdot) - v(s, \cdot) \|_{{\omega}} \, ds \right) + \int_0^t \| v(s, \cdot) \|_{{\omega}} \, ds \right) \quad \forall t \geq 0
\end{equation}
where $G\colon \R_+ \to \R$ denotes the function $G(s) = \int_1^{s} \frac{1}{\omega(\tau)} \, d \tau$. 

Therefore there exists a $\delta > 0$ such that if $\| w - v \|_{L^1_{\per} c_{\omega}} < \delta$ then $\sup_{y \in \T^2} \metric(X_N^{w}(y), X_N^{v}(y)) < \tilde{\delta}$ which leads to $h_{\topo}(X_N^w) > NK$.
Thus, due to Lemma~\ref{lemma:EntropyOfIterations}, $h_{\topo}(X_1^w) > K$ for all $w \in L^1_{\per}(\R_+; c_{\omega}(\T^2; \R^2))$ with $\| w - v \|_{L^1_{\per} c_{\omega}} < \delta$.
\end{proof}

\begin{remark}
\label{rmk:divfree}
When $b$ is divergence-free, $u$ in Claim~\ref{claim:FirstPerturbation} is in general not divergence-free. To construct a divergence-free $u$, one can carry out the perturbation at the level of the Hamiltonian. Indeed, if $\divergence b = 0$, then $b = a + \nabla^{\perp} H$ for some $a \in L^1([0,1];\R^2)$, $H \colon [0,1] \times \T^2 \to \R$ and it then suffices to perturb $H$.
\end{remark}

\bibliographystyle{plain}
\bibliography{biblio}
 
\end{document}